\newtheorem{theorem}{Theorem}
\newtheorem{remark}{Remark}
\newtheorem{lemma}{Lemma}
\newtheorem{example}{Example}
\begin{document}
\author{Can Huang}
 \address[Can Huang]{School of Mathematical Science, Xiamen University, Fujian, 361005, China.}
 \email{canhuang@xmu.edu.cn}
\author{Qingshuo Song}
\address[Qingshuo Song]{Department of Mathematics, City University of Hong Kong, Kowloon, Hong Kong.}
\author{Zhimin Zhang}
\address[Zhimin Zhang]{Beijing Computational Research Center, Haidian, Beijing, 100084, China and Department of Mathematics, Wayne
State University, Detroit, MI 48202, USA.}

\keywords{substantial fractional differential equation, spectral method, collocation method, Petrov-Galerkin, generalized Laguerre polynomials, condition number}
\subjclass{65N35, 65L60, 65M70,  41A05, 41A30, 41A25}
\title{\bf{Spectral Method for Substantial Fractional Differential Equations}}
\maketitle
\begin{abstract}
In this paper, a non-polynomial spectral Petrov-Galerkin method and  associated collocation method for substantial fractional differential equations (FDEs) are proposed, analyzed, and tested. We extend a class of generalized Laguerre polynomials  to form our basis. By a proper scaling of trial basis and test basis, our Petrov-Galerkin method results in a diagonal and thus well-conditioned linear systems for both fractional advection equation and fractional diffusion equation. In the meantime, we construct substantial fractional differential collocation matrices and provide explicit forms for both type of equations. Moreover,  the proposed method allows us to adjust a parameter in basis selection according to different given data to maximize the convergence rate. This fact has been proved in our error analysis and confirmed in our numerical experiments.
\end{abstract}

\section{Introduction}
With the development of advanced experimental technology, more and more particle diffusion process in a complex system are revealed to follow anonymous diffusion instead of the traditional Gaussian statistics. It is characterized by deviations from traditional linear time dependence in its second moment $<X(t)>\sim t^\alpha, \alpha\neq 1$. In particular, subdiffusion process ($0<\alpha<1$) has been a focal point in both physics and mathematics by virtue of its internal nature of connection with fractional differential equations (FDEs). This class of process has either infinite mean of waiting time or diverging jump length variance (L\'{e}vy flights). As a result, continuous time random walk (CTRW), rather than Brownian motion, seems to be  more competitive to model the anonymous sub-diffusion of particles in a complex system and this fact has been adopted in many applications such as underground environment problem \cite{HH98}, fluid flow \cite{KSZ96}, and turbulence and chaos \cite{Shle86}. Regarding to probability density function,  the CTRW  with diverging mean of waiting time results in a Fokker-Planck equation (FPE) of fractional derivative in time whereas L\'{e}vy flights  leads to a FPE of spatial fractional derivative \cite{MK00}.

Based upon an extension of CTRW to position-velocity space, Fredrichs et al \cite{FJBE06} generalized the concept of fractional derivative to a substantial one as follows
\begin{equation}\label{def1}
D_s^{\nu} f(x)=\frac{1}{\Gamma(\nu)}\bigg[\frac{d}{dx}+\sigma\bigg]\int_0^x (x-\tau)^{\nu-1}e^{-\sigma (x-\tau)}f(\tau)d\tau,\ 0<\nu<1,
\end{equation}
which has been investigated in \cite{CB11, CTB10} and references therein. It is noteworthy that the definition was extended to any order of $\nu>0$ by Deng recently \cite{CD}. In this work, we will concentrated on non-polynomial spectral Petrov-Galerkin method  associated collocation method for substantial fractional differential equations of order $\nu$ and $(1+\nu)$ with $0<\nu<1$.

By allowing trial space and test space to be different, the Petrov-Galerkin method has a remarkable edge over the Galerkin method on the choice of test space to enhance computational efficiency while preserving its convergence order. A general framework of the Petrov-Galerkin method for second type integral equation has been established in \cite{XuYS}.  By a careful selection of these two spaces, Karniadakis et al \cite{ZK14JCP} obtained an explicit expression of their approximation (without solving a linear system) for certain Riemann-Liouville FDEs. For substantial FDEs, our work seems to be the first attempt by this method.

Spectral collocation method  approximates true solution of an equation at a certain set of prescribed collocation points by multiplication of a spectral collocation matrix on a vector of unknowns. For standard integer order problems, interested readers are referred to \cite{CHQZ06,STW11,Tre00}.  Spectral collocation matrices for Riemann-Liouville fractional derivative are first proposed by Karniadakis et al \cite{ZK14} on the basis of ``poly-factonomial'' approximation, which is of the form $\{(x+1)^\mu P_n^{\alpha,\beta}(x)\}_{n=0}^N$, where $P_n^{\alpha,\beta}(x)$ is the Jacobi polynomial.  Akin to the collocation matrices for integer order derivatives, the condition number of the  Gauss-Lobatto collocation matrices for fractional differential equations grow like $\mathcal{O}(N^{2\nu})$, where $\nu$ is the order of the fractional derivative.  In order to circumvent the difficulty, Huang et al proposed well-conditioned collocation methods for standard fractional differential equations  based upon the Birkhoff interpolation \cite{HW14}. For finite difference methods, spectral Galerkin methods and finite element methods for FDEs, readers are referred to \cite{Diet1, JLP, Pod, Tadj, Xu1, Xu2, ZK14JCP}.

Compared to the extensive numerical methods developed for standard FDE, the effort on developing numerical schemes for substantial FDE is limited accounting for its relatively new appearance in the field. As far as we know,  numerical methods for substantial FDE are prominently finite difference methods (FDM), see \cite{BM10} for first order accuracy FDM and particle tracking methods. In \cite{MM09}, a comparison study of numerical solutions of three fractional partial differential equations falling in the class of L\'{e}vy models with substantial fractional derivative is explored. Recently, high order finite difference schemes, namely,  the tempered weighted and shifted Gr\"{u}nwald difference, has received some interest \cite{LD}. In contrast, spectral methods for substantial FDE is relatively scare and this work is our first exploration in this direction.  Our essential idea is
to find suitable basis for each specific equation which incorporates initial (boundary) conditions automatically and has explicit expression of
required substantial fractional derivative. Henceforth, our method has the following prominent features:

 (1) Our basis consists of a combination of generalized Laguerre polynomials $L_n^\alpha(x)$, exponential function $e^{-\sigma x}$, and power function $x^\alpha$. Hence, our method is far from a polynomial approximation. We further extend generalized Laguerre polynomials $L_n^\alpha(x)$ from $\alpha>-1$ to $\alpha\leq -1$. One distinct feature of our extension is that the obtained result is again a polynomial for all real $\alpha$, which is essentially different from the extension explored in \cite{G13}.

(2) By a careful selection of trial space and test space, our Petrov-Galerkin method yields diagonal and well-conditioned linear system for our  model problems.

 (3) In view of the priori known regularity of given data (usually at $t=0$), we are able to adjust the parameter $\alpha$ in our basis to achieve high order accuracy. Note that $\alpha=1-\nu$ indicates smooth function approximation. Therefore, convergence rate can be enhanced for some specific choice of  parameters.

 (4) To the best of our knowledge, this work is the first attempt to solve a substantial FDE on a semi-infinite domain without a domain truncation so far.

We point out that for standard Riemann-Liouville FDE, Petrov-Galerkin methods and a spectral collocation method have been proposed in \cite{CSW14, ZK14JCP, ZK14}. However, our method is far from a trivial extension of theirs. Firstly, the domain of substantial FDE is extended from $[-1,1]$ to $[0,\infty)$, which brings more challenging initial/boundary value conditions. Thus, our basis is distinct from either ``poly-factonomial'' \cite{ZK14JCP} or ``Generalized Jacobi Function'' \cite{CSW14} and is more complicated; Secondly, our spectral collocation algorithms are completely explicit. We provide a closed form of conversion from Lagrange interpolation polynomial to generalized Laguerre polynomials $L_n^\lambda (x)$.

The rest of the paper is organized as follows. In section 2, we shall recall some preliminary knowledge on modified Laguerre polynomials and make a necessary further extension. Some essential identities pertinent to our algorithms shall be introduced.  Then in section 3, we shall
explore Petrov-Galerkin method for substantial FDEs. Convergence analysis and numerical experiments will be provided.
In section 4, we shall present our fractional Lagrange interpolant, which satisfy the Kronecker delta property at collocation points and initial/boundary conditions. Based upon it, explicit spectral collocation algorithms and their convergence analysis will be presented. Our theoretical results are confirmed by associated numerical experiments.

\section{Preliminary}
In this section, we briefly present some preliminary knowledge pertaining to our sub-sequential algorithms. We begin with some notations and definitions in associated fractional calculus. In this work, we adopt the definition of substantial fractional derivative in \cite{CD} (note that a slight difference exists between the definition in \cite{FJBE06} and that in \cite{CD} for $0<\nu<1$).

Let $m$ be the smallest integer that exceeds $\nu$. Then
\begin{equation}\label{def2}
D_s^\nu f(x)=\frac{1}{\Gamma(m-\nu)}D_s^m\bigg[\int_0^x (x-\tau)^{m-\nu-1}e^{-\sigma (x-\tau)}f(\tau)d\tau\bigg],
\end{equation}
where
\begin{equation*}
D_s^m=\bigg(\frac{d}{dx}+\sigma\bigg)^m=\underbrace{(D+\sigma)\cdots (D+\sigma)}_{m\  times}.
\end{equation*}
\subsection{Modified Laguerre Function (MLF)}
Associated with the definition of substantial fractional derivative \eqref{def2}, we define MLF by
\begin{equation}
\hat{L}_n^{\lambda}(x)=x^\lambda e^{-\sigma x}L_n^{\lambda}(2\sigma x),
\end{equation}
where $0<\lambda<1$, $\sigma$ is inherited from \eqref{def2} and $L_n^{\lambda}(y)$ is the standard generalized Laguerre polynomials. It is noteworthy that the generalized Laguerre function defined in \cite[Page 241]{STW11} is a special case of our definition. Clearly, $\hat{L}_n^{\lambda}(x)$ is orthogonal with respect to the weight $\hat{w}^{\lambda}(x)=x^{-\lambda}$.

We next introduce hypergeometric functions for future use.  A general hypergeometric series with $p$ upper parameters and $q$ lower
parameters is defined as follows:
\begin{equation*}
_pF_q(a_1,\cdots, a_p; b_1,\cdots, b_q; z)=\sum\limits_{k=0}^\infty \frac{(a_1)^{(k)}\cdots (a_p)^{(k)}}{(b_1)^{(k)}\cdots (b_q)^{(k)}}\frac{z^k}{k!},
\end{equation*}
where $(a)^{(k)}$ is the Pochhammer symbol $(a)^{(k)}=(a)(a+1)^{(k-1)}$.

Some useful properties of MLF or standard Laguerre polynomials are listed as follows.

$\bullet$ Three-term recurrence relation:
  \begin{equation}
 \left\{ \begin{array}{l}
 \hat{L}_0^\lambda (x)=x^{\lambda}e^{-\sigma x}, \hat{L}_1^\lambda(x)=x^\lambda e^{-\sigma x}(-2\sigma x+\lambda+1),\\
 (n+1)\hat{L}_{n+1}^\lambda(x)=(2n+\lambda+1-2\sigma x)\hat{L}_n^\lambda(x)-(n+\lambda)\hat{L}_{n-1}^\lambda(x).
 \end{array}
 \right.
  \end{equation}
It provides a stable approach to evaluate MLFs in our emerging algorithms. Furthermore,
$$\hat{L}_n^\lambda(x)\to 0\ \text{as}\ x\to \infty.$$

$\bullet$ Orthogonality:
\begin{equation}
 \int_0^\infty  x^{-\lambda}\hat{L}_n^\lambda(x)\hat{L}_m^\lambda(x)dx=\bigg(\frac{1}{2\sigma}\bigg)^{1+\lambda} \frac{\Gamma(n+\lambda+1)}{n!}\delta_{nm}.
  \end{equation}

 $\bullet$ Differential formulas \cite{STW11}:
   \begin{subequations}\label{diff1}
   \begin{align}
  & \frac{d^k}{dx^k}L_n^\lambda(x)=(-1)^kL_{n-k}^{\lambda+k}(x),\\
  &xL_{n-1}^{\lambda+1}(x)=(n+\lambda)L_{n-1}^{\lambda}(x)-nL_n^\lambda(x),\\
   &L_n^{\lambda}(x)=L_n^{\lambda+1}(x)-L_{n-1}^{\lambda+1}(x).
   \end{align}
   \end{subequations}
   A simple computation yields
   \begin{equation}\label{diff2}
   x^2L_{n-2}^{\lambda+2}(x)=(n+\lambda)(n-1+\lambda)L_{n-2}^\lambda(x)-2(n-1)(n+\lambda)L_{n-1}^\lambda(x)+n(n-1)L_{n}^\lambda(x).
   \end{equation}

$\bullet$ Essential identities \cite{PBM86}:
 \begin{eqnarray}\label{originalI}
&& \int_0^a (a-x)^{\beta-1} x^{\alpha-1} L_n^\lambda (cx)dx=\frac{a^{\alpha+\beta-1}(\lambda+1)_n}{n!} B(\alpha,\beta) \ _2F_2(-n,\alpha; \alpha+\beta,\lambda+1; ac), \alpha,\beta>0,\quad
 \end{eqnarray}
 where $B(\cdot,\cdot)$ is the beta function.  In particular, if $\alpha=\lambda+1$
 \begin{equation}
 \int_0^a (a-x)^{\beta-1} x^\lambda L_n^\lambda (cx)dx=a^{\beta+\lambda} B(\beta,\lambda+n+1)L_n^{\beta+\lambda}(ac).
 \end{equation}
This identity immediately implies
\begin{equation}
\int_0^x (x-\tau)^{\nu-1}e^{-\sigma(x-\tau)}\hat{L}_n^\lambda (\tau)d\tau=B(\nu,\lambda+n+1)\hat{L}_n^{\nu+\lambda}(x).
\end{equation}
Henceforth, by \eqref{diff1}, a simple computation implies
\begin{eqnarray}
D_s^{1-\nu} [\hat{L}_n^\lambda (x)]&=& \frac{\Gamma(\lambda+n+1)}{\Gamma(\nu+\lambda+n+1)x} \bigg[(\nu+\lambda)\hat{L}_n^{\lambda+\nu}(x)-2\sigma\hat{L}_{n-1}^{\lambda+\nu+1}(x)\bigg],\nonumber\\
&=&\frac{\Gamma(\lambda+n+1)}{\Gamma(\nu+\lambda+n+1)}x^{\lambda+\nu-1}e^{-\sigma x}(n+\lambda+\nu)
\bigg(L_n^{\lambda+\nu}(2\sigma x)-L_{n-1}^{\lambda+\nu}(2\sigma x)\bigg)\nonumber\\
&=&\frac{\Gamma(\lambda+n+1)}{\Gamma(\nu+\lambda+n)}\hat{L}_n^{\lambda+\nu-1}(x). \label{essentialI1}\\
D_s^{2-\nu} [\hat{L}_n^\lambda (x)]&=& \frac{\Gamma(\lambda+n+1)}{\Gamma(\nu+\lambda+n+1)x^2} \bigg[(\nu+\lambda)_2\hat{L}_n^{\lambda+\nu}(x)-4\sigma(\lambda+\nu)\hat{L}_{n-1}^{\lambda+\nu+1}(x)+4\sigma^2\hat{L}_{n-2}^{\lambda+\nu+2}(x)\bigg]\nonumber\\
&=& \frac{\Gamma(n+\lambda+1)}{\Gamma(n-1+\lambda+\nu)}\hat{L}_n^{\lambda+\nu-2}(x). \label{essentialI2}
\end{eqnarray}
where $(a)_k=a(a-1)\cdots (a-k+1)$ is the falling factorial.

Identities \eqref{essentialI1} and \eqref{essentialI2} allow for both smooth function approximation ($\lambda=0$) and function with a certain weak singularity approximation for a substantial fractional differential equation.  If the index $\lambda+\nu-2\leq -1$ of $\hat{L}_{n}^{\lambda+\nu-2}(x)$, we shall need to extend the Laguerre polynomials further.

\subsection{Extended Laguerre polynomial}
We extend the Laguerre polynomial based upon a formula \cite{Szego}
\begin{equation}
L_n^\alpha(x)=L_n^{\alpha+1}(x)-L_{n-1}^{\alpha+1}(x), \quad L_0^\alpha(x)=1.
\end{equation}
Unlike the extension for Jacobi polynomial in \cite{GSW09}, our extension result is always a polynomial. Since $\lambda+\nu-2>-2$, we thereby restrict our attention to the case $-2<\alpha\leq-1$. However, we can easily extend $\alpha\in R$ by the same fashion. It is noteworthy that in  \cite{Szego} Szeg\"{o} suggests a possible extension by virtue of hypergeometric form of Laguerre polynomials. Specifically, when $\alpha=-k$ for $k\geq 1$ and $k\in \mathcal{N}$,
\begin{equation*}
L_n^{-k}=(-x)^k \frac{(n-k)!}{n!}L_{n-k}^k(x).
\end{equation*}
Unfortunately, the exploration of properties of the extended polynomial is limited. We find that our extended Laguerre polynomials preserves basic properties of standard Laguerre polynomials as follows. See appendix A for detailed verifications.

$\bullet$ The Sturn-Liouville equation:
\begin{equation}
\partial_x[e^{-x}x^{\alpha+1}\partial_x L_n^\alpha(x)]+nx^\alpha e^{-x}L_n^\alpha(x)=0.
\end{equation}

$\bullet$ Recursive relation:
\begin{equation}
(n+1)L_{n+1}^\alpha(x)=(2n+1+\alpha-x)L_n^\alpha(x)-(n+\alpha)L_{n-1}^\alpha(x).
\end{equation}

$\bullet$ Orthogonality:
\begin{equation}\label{ex:der}
\int_0^\infty \partial_x^m L_n^\alpha(x)\partial_x^m L_k^\alpha(x)x^{\alpha+m}e^{-x}dx=\frac{\Gamma(n+\alpha+1)}{\Gamma(n-m+1)}\delta_{n,k}:=\gamma_{n,m}\delta_{n,k}, m\geq 0.
\end{equation}

For the sake of analysis, we introduce a space as \cite{GSW09}
\begin{equation}
B^r_{w^\alpha}([0,\infty))=\{u\ \text{is\ measurable\ and}\ \|u\|_{r,w^\alpha}<\infty\}
\end{equation}
equipped with norm
\begin{equation}
\|u\|_{r,w^\alpha}=\bigg(\sum\limits_{k=0}^r\|\partial_x^ku\|_{w^{\alpha+k}}^2\bigg)^{1/2}
\end{equation}
and the weight $w^\alpha(x)=x^\alpha e^{-x}$. Consider the orthogonal projection $\Pi_N^\alpha: B_{w^\alpha}^r\to P_N$
such that
\begin{equation}
(u-\Pi_N^\alpha u, v)=0, \ \forall v\in P_N.
\end{equation}

\begin{lemma}\label{Newlem1}
For any $u\in B^r_{w^\alpha}$,
\begin{equation}
\|\partial_x^m(u-\Pi_N^\alpha u)\|_{w^{\alpha+m}}\leq CN^{(m-r)/2} \|\partial_x^ru\|_{w^{\alpha+r}}.
\end{equation}
\end{lemma}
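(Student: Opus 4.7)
The plan is to follow the standard Laguerre spectral-approximation template, adapted to the extended Laguerre polynomials $L_n^\alpha$ with $-2<\alpha\le -1$. First I would expand
\[
u = \sum_{n=0}^\infty \hat u_n\, L_n^\alpha(x), \qquad \hat u_n = \frac{1}{\gamma_{n,0}} \int_0^\infty u(x)\, L_n^\alpha(x)\, w^\alpha(x)\, dx,
\]
in the basis of extended Laguerre polynomials. Because $\{L_n^\alpha\}$ is orthogonal with respect to $w^\alpha$ by \eqref{ex:der} (the $m=0$ case), the $L^2_{w^\alpha}$-orthogonal projection onto $P_N$ is just the partial sum $\Pi_N^\alpha u = \sum_{n=0}^N \hat u_n L_n^\alpha$. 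Consequently $u-\Pi_N^\alpha u = \sum_{n>N} \hat u_n L_n^\alpha$, and differentiating $m$ times term-by-term together with the $m$th-order orthogonality \eqref{ex:der} yields
\[
\|\partial_x^m(u-\Pi_N^\alpha u)\|_{w^{\alpha+m}}^2 = \sum_{n=N+1}^\infty |\hat u_n|^2\, \gamma_{n,m}, \qquad \|\partial_x^r u\|_{w^{\alpha+r}}^2 = \sum_{n=r}^\infty |\hat u_n|^2\, \gamma_{n,r}.
\]

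The second step is the elementary Gamma-ratio estimate. For $m\le r$ and $n\ge N+1\ge r+1$, using $\gamma_{n,m}=\Gamma(n+\alpha+1)/\Gamma(n-m+1)$ one has
\[
\frac{\gamma_{n,m}}{\gamma_{n,r}} = \frac{\Gamma(n-r+1)}{\Gamma(n-m+1)} = \prod_{j=1}^{r-m}\frac{1}{n-m-j+1} \le \frac{1}{(N-r+2)^{\,r-m}} \le C\, N^{m-r},
\]
so that $\gamma_{n,m}\le C N^{m-r}\gamma_{n,r}$ uniformly in $n>N$. Plugging this into the series identity above and bounding the tail by the full sum gives
\[
\|\partial_x^m(u-\Pi_N^\alpha u)\|_{w^{\alpha+m}}^2 \le C N^{m-r} \sum_{n=N+1}^\infty |\hat u_n|^2 \gamma_{n,r} \le C N^{m-r} \|\partial_x^r u\|_{w^{\alpha+r}}^2,
\]
which after taking square roots is precisely the claim.

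The main obstacle, which I would address carefully, is not the Gamma-ratio step but the legitimacy of the series expansion when $\alpha\le -1$: the weight $w^\alpha(x)=x^\alpha e^{-x}$ is not integrable at the origin, so $L^2_{w^\alpha}$ is not the usual framework and the Laguerre expansion is not automatically available. The authors have arranged matters so that the space $B^r_{w^\alpha}$ controls $\partial_x^k u$ in $L^2_{w^{\alpha+k}}$, with heavier weights that tame the singularity at $0$, and the orthogonality \eqref{ex:der} is stated for \emph{derivatives}, whose prefactors in $L_n^\alpha$ supply enough powers of $x$ to make the integrals finite. I would therefore first show that on $B^r_{w^\alpha}$ the map $u\mapsto (\hat u_n)$ defined above is well posed (interpreting the inner product through the Sturm--Liouville formulation or through integration by parts using \eqref{ex:der}), verify Parseval-type identities for $\partial_x^m u$ with $m\ge 1$, and treat the $m=0$ case by a density/cut-off argument or by noting that the inequality at $m=0$ can be reduced to $m=1$ via the Sturm--Liouville operator. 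With this preparation the rest of the argument is the routine Gamma-ratio estimate above.
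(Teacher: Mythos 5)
Your proposal follows essentially the same route as the paper: expand $u-\Pi_N^\alpha u$ in the (extended) Laguerre basis, use the orthogonality \eqref{ex:der} to get Parseval identities for the weighted norms, and conclude with the ratio $\gamma_{n,m}/\gamma_{n,r}\le CN^{m-r}$; the paper simply cites \cite{STW11} for $\alpha+m>-1$ and carries out this computation only for the remaining case $m=0$, $-2<\alpha\le -1$. Your additional attention to the integrability of the weight $x^\alpha e^{-x}$ at the origin when $\alpha\le -1$ is a legitimate point that the paper passes over silently, but it does not change the substance of the argument.
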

\begin{proof}
It is clear that
\begin{equation}
u-\Pi_N^\alpha u=\sum\limits_{n=N+1}^\infty \hat{u}_n L_n^\alpha(x).
\end{equation}
The case $\alpha+m>-1$ has been proved in \cite{STW11}.  For $-2<\alpha\leq -1$, we only need to consider the case $m=0$, otherwise,
the index turn back to the standard case.
\begin{eqnarray}
\| (u-\Pi_N^\alpha u)\|^2_{w^{\alpha}}&=&\sum\limits_{n=N+1}^\infty \hat{u}_n^2\gamma_{n,0}=\sum\limits_{n=N+1}^\infty \hat{u}_n^2\gamma_{n,r}\frac{\gamma_{n,0}}{\gamma_{n,r}}\nonumber\\
&\leq& \frac{\gamma_{N+1,0}}{\gamma_{N+1,r}} \|\partial_x^ru\|_{w^{\alpha+r}}\nonumber\\
&\leq& CN^{-r} \|\partial_x^ru\|_{w^{\alpha+r}},
\end{eqnarray}
where, $\gamma_{n,k}$ is defined in \eqref{ex:der}.
\end{proof}
\subsection{Laguerre-Gauss quadrature and Laguerre-Gauss-Radau quadrature}
In our algorithms for spectral collocation method, we shall use these two types of numerical quadrature, respectively.
Let $\{x_i,w_i\}_{i=0}^N$ be the Laguerre-Gauss quadrature or Laguerre-Gauss-Radau points and weights associated with $\omega(x)=x^\lambda e^{-2\sigma x}$. Then \cite{STW11},

$\bullet$ For the Laguerre-Gauss quadrature,
\begin{eqnarray}
&& \{x_i\}_{i=0}^N\ \text{are zeros of } \ L_{N+1}^{\lambda}(2\sigma x), \notag\\
&&w_i=\frac{\Gamma(N+\lambda+1)}{(N+1)!(N+\lambda+1)}\frac{x_i^\lambda}{[L_N^\lambda (2\sigma x_i)]^2}\bigg(\frac{1}{2\sigma}\bigg)^{1+\lambda}, 0\leq i\leq N.
\end{eqnarray}

$\bullet$ For the Laguerre-Gauss-Radau quadrature,
\begin{eqnarray}
&&x_0=0, \ \text{and}\ \{x_i\}_{i=1}^N\ \text{are zeros of } \ L_N^{1+\lambda}(2\sigma x).\notag\\
&&w_0=\frac{(\lambda+1)\Gamma^2(\lambda+1)N!}{\Gamma(N+\lambda+2)}\bigg(\frac{1}{2\sigma}\bigg)^{1+\lambda}\notag\\
&&w_i=\frac{\Gamma(N+\lambda+1)}{N!(N+\lambda+1)}\frac{1}{[L_N^\lambda (2\sigma x_i)]^2}\bigg(\frac{1}{2\sigma}\bigg)^{1+\lambda}, 1\leq i\leq N.
\end{eqnarray}

For both sets of quadrature points and weights, we have
\begin{equation}
\int_0^\infty p(x)x^\lambda e^{-\sigma x}dx=\sum\limits_{i=0}^N w_ip(x_i), \forall p\in P_{2N}.
\end{equation}

\section{Petrov-Galerkin method}
Motivated by \eqref{essentialI1} and \eqref{essentialI2}, we establish our variational form in the trial space $U_1=\text{span} \{(n+1)^{-\frac{(\lambda-\nu+1)}{2}}\hat{L}^{\lambda}_n(x)\}$ and test space $V_1=\text{span}\{(n+1)^{-\frac{(\lambda+\nu-1)}{2}}\hat{L}_n^{\lambda+\nu-1}(x)\}$ with weight $\hat{w}_1(x)=x^{1-\lambda-\nu}$ for fractional advection equation,  and $U_2=\text{span}\{(n+1)^{-\frac{(\lambda-\nu+2)}{2}}\hat{L}^{\lambda}_n(x)\}$ and  $V_2=\text{span}\{(n+1)^{-\frac{(\lambda+\nu-2)}{2}}\hat{L}_n^{\lambda+\nu-2}(x)\}$ with weight $\hat{w}_2(x)=x^{2-\lambda-\nu}$ for fractional diffusion equation.
\subsection{Substantial fractional advection equation}
We seek the approximation of solution to the simplest substantial FODE
\begin{equation}\label{eqPG1}
\left\{
\begin{array}{l}
D_s^{1-\nu} u(x)=f(x), 0<\nu<1\\
\lim\limits_{x\to\infty}u(x)=0,
\end{array}
\right.
\end{equation}
which is of the form
\begin{equation}\label{PGuN}
u_N(x):=\sum\limits_{k=0}^N \frac{c_k}{(k+1)^{(\lambda-\nu+1)/2}} \hat{L}_k^{\lambda}(x),
\end{equation}
where $c_k$ are coefficients to be determined.  By projecting the \eqref{PGuN} into $V_1$, we obtain for $0\leq n\leq N$
\begin{eqnarray}
&&(f, \frac{1}{(1+n)^{(\lambda+\nu-1)/2}}\hat{L}_n^{\lambda+\nu-1})_{\hat{w}_1}\nonumber\\
&=&\sum\limits_{k=0}^N  \frac{c_k}{(1+n)^{(\lambda+\nu-1)/2}(k+1)^{(\lambda-\nu+1)/2}}\bigg(D_s^{1-\nu} \hat{L}_k^{\lambda}, \hat{L}_n^{\lambda+\nu-1}\bigg)_{\hat{w}_1}\nonumber\\
&=&\sum\limits_{k=0}^N  \frac{c_k}{(1+n)^{(\lambda+\nu-1)/2}(k+1)^{(\lambda-\nu+1)/2}}
\frac{\Gamma(k+\lambda+1)}{\Gamma(k+\nu+\lambda)}\bigg(\hat{L}_k^{\lambda+\nu-1},\hat{L}_n^{\lambda+\nu-1}\bigg)_{\hat{w}_1}\nonumber\\
&=&\sum\limits_{k=0}^N \frac{c_k}{(k+1)^\lambda}\frac{\Gamma(k+\lambda+1)}{\Gamma(k+1)}\bigg(\frac{1}{2\sigma}\bigg)^{\lambda+\nu}\delta_{k,n}\nonumber
\end{eqnarray}
Denote
\begin{eqnarray}
&&A=\bigg(\frac{1}{2\sigma}\bigg)^{\lambda+\nu}\begin{pmatrix}
\frac{\Gamma(\lambda+1)}{\Gamma(1)1^{\lambda}}&& & &\\
& \frac{\Gamma(\lambda+2)}{\Gamma(2)2^{\lambda}}&  & &\\
& &&\ddots & \\
& & & & \frac{\Gamma(\lambda+N+1)}{\Gamma(N+1)(N+1)^{\lambda}}
\end{pmatrix}\nonumber\\
&&F=\bigg(\int_0^\infty f(x)e^{-\sigma x}L_0^{\lambda+\nu-1}(2\sigma x)dx,\  \cdots, \frac{1}{(N+1)^{(\lambda+\nu-1)/2}}\int_0^\infty f(x)e^{-\sigma x}L_N^{\lambda+\nu-1}(2\sigma x)dx \bigg)^T.
\end{eqnarray}
The coefficients $C=(c_0, c_1,\cdots, c_N)^T$ is obtained by solving $AC=F$, where the integral in $F$ is computed by appropriate Gauss-Laguerre quadratures presented in subsection 2.3.

\begin{remark}
Note that our algorithm assumes the $u(0)=0$ for \eqref{eqPG1}.  If $u(0)=u_0\neq 0$, we can decompose the solution
$u(x)=w(x)+u_0e^{-\sigma x}$ and solve the following associated equation for $w(x)$:
\begin{equation}
D_s^{1-\nu} w(x)=f(x)-u_0\nu x^{\nu-1}e^{-\sigma x}, \quad
\lim\limits_{x\to\infty}w(x)=0.
\end{equation}
\end{remark}
\subsection{Substantial fractional diffusion equation}
Similarly, we consider equation
\begin{equation}\label{eqPG2}
\left\{
\begin{array}{l}
D_s^{2-\nu} u(x)=f(x), 0<\nu<1,\\
u(0)=0,\lim\limits_{x\to\infty}u(x)=0.
\end{array}
\right.
\end{equation}
The approximation $u_N(x)$ is of the form $u_N(x)=\sum\limits_{k=0}^N \frac{c_k}{(k+1)^{(\lambda-\nu+2)/2}} \hat{L}_k^\lambda(x)$. By equation \eqref{essentialI2} and orthogonality of the extended Laguerre polynomials,
\begin{eqnarray}
&&(f, \frac{1}{(n+1)^{(\lambda+\nu-2)/2}}\hat{L}_n^{\lambda+\nu-2})_{\hat{w}_2}\\
&=&\sum\limits_{k=0}^N \frac{c_k}{(n+1)^{(\lambda+\nu-2)/2}(k+1)^{(\lambda-\nu+2)/2}} \bigg(D_s^{2-\nu} \hat{L}_k^{\lambda}, {L}_n^{\lambda+\nu-2}\bigg)_{\hat{w}_2}\nonumber\\
&=&\sum\limits_{k=0}^N \frac{c_k}{(n+1)^{(\lambda+\nu-2)/2}(k+1)^{(\lambda-\nu+2)/2}} \frac{\Gamma(\lambda+k+1)}{\Gamma(\nu+\lambda+k-1)}\bigg(\hat{L}_k^{\lambda+\nu-2}, \hat{L}_n^{\lambda+\nu-2}\bigg)_{\hat{w}_2}\nonumber\\
&=&\sum\limits_{k=0}^N \frac{c_k}{(k+1)^{\lambda}} \frac{\Gamma(\lambda+k+1)}{\Gamma(k+1)}\bigg(\frac{1}{2\sigma}\bigg)^{\lambda+\nu-1}\delta_{k,n}
\end{eqnarray}

Hence, we solve a diagonal system $AC=F$, where
\begin{eqnarray}
&&A=\bigg(\frac{1}{2\sigma}\bigg)^{\lambda+\nu-1}\begin{pmatrix}
\frac{\Gamma(\lambda+1)}{\Gamma(1)1^{\lambda}}&& & &\\
& \frac{\Gamma(\lambda+2)}{\Gamma(2)2^{\lambda}}&  & &\\
& &&\ddots & \\
& & & & \frac{\Gamma(\lambda+N+1)}{\Gamma(N+1)(N+1)^{\lambda}}
\end{pmatrix}\nonumber\\
&&F=\bigg(\int_0^\infty f(x)e^{-\sigma x}L_0^{\lambda+\nu-2}(2\sigma x)dx,\  \cdots, \frac{1}{(N+1)^{(\lambda+\nu-2)/2}}\int_0^\infty f(x)e^{-\sigma x}L_N^{\lambda+\nu-2}(2\sigma x)dx \bigg)^T.
\end{eqnarray}

\begin{remark}
The effect of scaling factors  in test space or trial space is twofold.  Firstly, it plays the role of precondition factor for the matrix $A$; Secondly, it is indispensable for the verification of one essential condition in our convergence analysis.
\end{remark}

\begin{remark}
For an equation with non-homogeneous initial condition, we can take a similar process as that in Remark 1 to make a transformation.
\end{remark}
\subsection{Convergence analysis}
 Denote by $x^*(x):=<x,x^*>$ the action of bounded linear operator $x^*$ on $x$ in a Hilbert space $X$. Thus, by Rieze representation theory, $x^*(x)=(x,y) $  for some $y$ in the same space. Choose two finite-dimensional spaces $X_n$ and $Y_n$ satisfy condition (H)
 \cite{XuYS}:\ for any $x\in X$, we have $\|x_n-x\|\to 0$ and for any $y\in X^*$, we have $\|y_n-y\|\to 0$ as $n\to\infty$, and
 \begin{equation}
\text{dim}X_n=\text{dim} Y_n.
\end{equation}
 Furthermore, we call $\{X_n, Y_n\}$ a {\it regular pair} \cite{XuYS} if there exists a linear operator $\Pi_n: X_n\to Y_n$ with $\Pi_n X_n=Y_n$ and satisfy the conditions
\begin{eqnarray}
&&(a) \ \|x_n\|\leq C_1 <x_n,\Pi_n x_n>^{1/2}\ \text{for \ all}\  x_n\in X_n;\nonumber\\
&&(b) \ \|\Pi_nx_n\|\leq C_2 \|X_n\|\ \text{for \ all}\ x_n\in X_n,\nonumber
\end{eqnarray}
where $C_1$ and $C_2$ are independent of $n$.

 For any $x\in X$, $P_n x$ is a {\it generalized best approximation} from $X_n$ to $x$ with respect to $Y_n$ if we have the identity
 \begin{equation}
 (x-P_nx, y_n)=0, \ \text{for\ any\ } y_n\in Y_n.
 \end{equation}
Then, we have the  following two lemmas.
 \begin{lemma}[\cite{XuYS}]
 \label{lem1}
 For each $x\in X$, the generalized best approximation from $X_n$ to $x$ with respect to $Y_n$ exists uniquely if and only if
 $$Y_n\cap X_n^{\bot}=\{0\}.$$
 Under this condition, $P_n$ is a projection.
 \end{lemma}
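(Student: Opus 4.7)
The plan is to reformulate the Petrov--Galerkin defining condition as a finite-dimensional linear system and read the equivalence off from standard linear algebra. Fix bases $\{e_1,\ldots,e_n\}$ of $X_n$ and $\{f_1,\ldots,f_n\}$ of $Y_n$, where $n := \dim X_n = \dim Y_n$. Writing $P_n x = \sum_{i=1}^n c_i e_i$, the identity $(x - P_n x, y_n) = 0$ for all $y_n \in Y_n$ reduces to the square linear system
\begin{equation*}
M c = b, \qquad M_{ji} = (e_i, f_j), \qquad b_j = (x, f_j).
\end{equation*}
Existence and uniqueness of $P_n x$ for every $x \in X$ is therefore equivalent to $M$ being invertible, i.e.\ to $M$ having trivial left and right null spaces simultaneously. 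Since $M$ is square, these two nondegeneracy conditions are equivalent to each other.

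Next I would interpret each null space geometrically. A vector $c$ lies in the right null space of $M$ iff $\sum_i c_i e_i$ is orthogonal to every $f_j$, i.e.\ iff $\sum_i c_i e_i \in X_n \cap Y_n^\perp$; thus the right null space is trivial iff $X_n \cap Y_n^\perp = \{0\}$. Symmetrically, $d$ lies in the left null space iff $\sum_j d_j f_j \in Y_n \cap X_n^\perp$, so the left null space is trivial iff $Y_n \cap X_n^\perp = \{0\}$. Combining with the preceding paragraph yields the equivalence of the two geometric conditions and the fact that each is equivalent to existence and uniqueness of the generalized best approximation. The form $Y_n \cap X_n^\perp = \{0\}$ chosen in the lemma is the one that will be used later when comparing the approximation error with the best approximation in $X_n$ through test functions in $Y_n$.

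To verify the projection property, I would argue that any $z \in X_n$ trivially satisfies $(z - z, y_n) = 0$ for all $y_n \in Y_n$, so by the uniqueness established above $P_n z = z$. Since $P_n x \in X_n$ for every $x \in X$, applying this with $z = P_n x$ gives $P_n(P_n x) = P_n x$, i.e.\ $P_n^2 = P_n$. Linearity of $P_n$ follows immediately from the linear dependence of the right-hand side $b$ on $x$, so $P_n$ is a (generally non-orthogonal) bounded projection of $X$ onto $X_n$.

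The crucial structural observation is that the assumption $\dim X_n = \dim Y_n$ makes $M$ square, so that injectivity and surjectivity coincide and the two one-sided nondegeneracy conditions become equivalent; I do not anticipate any real obstacle beyond the bookkeeping required to identify each null space of $M$ with the corresponding geometric intersection.
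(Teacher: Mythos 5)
Your proof is correct. Note, however, that the paper does not actually prove this lemma: it is quoted verbatim from the reference \cite{XuYS} (Chen and Xu), so there is no in-paper argument to compare against. Your reduction to the square linear system $Mc=b$ with $M_{ji}=(e_i,f_j)$, the identification of the right and left null spaces of $M$ with $X_n\cap Y_n^{\perp}$ and $Y_n\cap X_n^{\perp}$ respectively, and the use of squareness to equate the two nondegeneracy conditions is exactly the standard argument (and essentially the one in the cited source). One small expository point: in the direction ``existence and uniqueness for all $x$ implies $M$ invertible,'' it is cleanest to note that uniqueness alone forces $\ker M=\{0\}$, hence invertibility of the square matrix $M$; this avoids having to check that the map $x\mapsto((x,f_j))_j$ is onto, which your phrasing implicitly relies on for the existence half. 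The projection and linearity arguments are fine as written.
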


 \begin{lemma}[\cite{XuYS}]
 \label{lem2}
 Assume that $\{X_n, Y_n\}$ satisfy condition (H) and is a regular pair. Then, the  following statements hold.
 \begin{eqnarray*}
 &&(i) \|P_nx-x\|\to 0, \ as \ n\to \infty\ for\ all \ x\in X;\\
 &&(ii) There \ exists \ a \ constant \ C>0\ such\ that \ \|P_n\|\leq C\ for \ all\ n=1,2,\cdots;\\
 &&(iii) \|P_nx-x\|\leq C\|Q_nx-x\| \ for\ some\ constant\ C>0 \ independent \ of \ n,
 \end{eqnarray*}
 where $Q_nx$ is the best approximation from $X_n$ to $x$, that is,
 \begin{equation*}
 \|x-Q_nx\|=\inf\limits_{x_n\in X_n} \|x-x_n\|.
 \end{equation*}
 \end{lemma}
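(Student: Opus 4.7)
The plan is to establish the three conclusions in the order (ii), (iii), (i): first a uniform operator bound for the projections $P_n$, then a Lebesgue-type comparison with the best approximation $Q_n x$, and finally strong convergence as a free corollary of condition (H).

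For (ii), I would fix $x \in X$, set $x_n := P_n x \in X_n$, and activate condition (a) of the regular pair:
\begin{equation*}
\|P_n x\|^2 \le C_1^2 \, (P_n x, \Pi_n P_n x).
\end{equation*}
The key observation is that $\Pi_n P_n x$ lies in $Y_n$, so the defining orthogonality $(x - P_n x, y_n) = 0$ for all $y_n \in Y_n$ yields $(P_n x, \Pi_n P_n x) = (x, \Pi_n P_n x)$. Cauchy--Schwarz followed by condition (b) then gives
\begin{equation*}
\|P_n x\|^2 \le C_1^2 \|x\| \, \|\Pi_n P_n x\| \le C_1^2 C_2 \, \|x\| \, \|P_n x\|,
\end{equation*}
and dividing through produces $\|P_n\| \le C_1^2 C_2$, independent of $n$, which is (ii).

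For (iii), Lemma~\ref{lem1} already tells us that $P_n$ is a projection onto $X_n$, so $P_n x_n = x_n$ for every $x_n \in X_n$. Writing $x - P_n x = (I - P_n)(x - x_n)$ for arbitrary $x_n \in X_n$ yields
\begin{equation*}
\|x - P_n x\| \le (1 + \|P_n\|) \|x - x_n\|.
\end{equation*}
Taking the infimum over $x_n \in X_n$ and invoking the bound from (ii) gives (iii) with constant $1 + C_1^2 C_2$. Conclusion (i) then follows immediately: by condition (H) there is a sequence $x_n \in X_n$ with $\|x_n - x\| \to 0$, which forces $\|x - Q_n x\| \to 0$, and (iii) upgrades this to $\|P_n x - x\| \to 0$.

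The main obstacle is step (ii): the regular-pair hypothesis is engineered precisely to produce a uniform bound for $P_n$, and the trick is to apply (a) with the specific element $P_n x$ so that the generalized-orthogonality condition gets triggered and an unknown factor of $\|P_n x\|$ cancels from both sides of the estimate. Once (ii) is in hand, (iii) and (i) are standard consequences of the principle that a family of uniformly bounded projections onto a dense nested chain of subspaces converges strongly to the identity.
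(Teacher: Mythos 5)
The paper does not prove this lemma at all: it is quoted verbatim from \cite{XuYS} and used as a black box, so there is no internal proof to compare against. Your argument is correct and is essentially the standard proof from that reference: condition (a) applied to $P_nx$, the generalized orthogonality $(x-P_nx,\Pi_nP_nx)=0$, Cauchy--Schwarz, and condition (b) give the uniform bound $\|P_n\|\le C_1^2C_2$, after which (iii) and (i) follow from the projection identity $x-P_nx=(I-P_n)(x-x_n)$ and condition (H). The only point worth making explicit is that the regular-pair hypothesis itself guarantees $Y_n\cap X_n^{\bot}=\{0\}$ (if $y_n=\Pi_nx_n$ is orthogonal to $X_n$ then $\langle x_n,\Pi_nx_n\rangle=0$, so $x_n=0$ by (a) and hence $y_n=0$), so that $P_n$ exists and is a projection by Lemma~\ref{lem1} before you invoke those facts.
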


With these lemmas in our hands, we are ready to explore the convergence rate of our Petrov-Galerkin method.
\begin{theorem}\label{th:eqPG1}
Let $u$ be the solution of \eqref{eqPG1} and $u_N$ be its corresponding Petrov-Galerkin approximation. If
$g_f(x):=f(x)e^{\sigma x}x^{1-\lambda-\nu}\in B^m_{w^{\lambda+\nu-1}}([0,\infty))$, then
\begin{equation}
\|D_s^{1-\nu}(u-u_N)\|\leq CN^{-m/2}\|\partial_x^mg_f\|_{w^{\lambda+\nu+m-1}}.
 \end{equation}
\end{theorem}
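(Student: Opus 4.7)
The plan is to recognize $D_s^{1-\nu} u_N$ as a weighted orthogonal projection of $f$ and then reduce the resulting best-approximation problem to a standard Laguerre polynomial approximation covered by Lemma \ref{Newlem1}.

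First I would exploit identity \eqref{essentialI1}. Since $D_s^{1-\nu}\hat L_k^\lambda = \frac{\Gamma(\lambda+k+1)}{\Gamma(\nu+\lambda+k)}\hat L_k^{\lambda+\nu-1}$, the operator $D_s^{1-\nu}$ maps the trial basis of $U_1$ to scalar multiples of the test basis of $V_1$. Hence $D_s^{1-\nu}U_1$ and $V_1$ both equal $W_N:=\text{span}\{\hat L_n^{\lambda+\nu-1}\}_{n=0}^N$ as subspaces, and in particular $D_s^{1-\nu}u_N \in W_N$. Subtracting the exact identity $D_s^{1-\nu}u=f$ from the Petrov-Galerkin equation yields
\begin{equation*}
(f - D_s^{1-\nu}u_N,\ v)_{\hat{w}_1} = 0 \quad \text{for all } v\in W_N,
\end{equation*}
so $D_s^{1-\nu}u_N$ is the $L^2_{\hat{w}_1}$-orthogonal projection of $f$ onto $W_N$. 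Consequently
\begin{equation*}
\|D_s^{1-\nu}(u - u_N)\|_{\hat{w}_1} \;=\; \inf_{w\in W_N}\|f - w\|_{\hat{w}_1}.
\end{equation*}

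Next I would factor $f(x) = e^{-\sigma x}x^{\lambda+\nu-1}g_f(x)$ (the defining relation of $g_f$) and observe that every element of $W_N$ has the form $e^{-\sigma x}x^{\lambda+\nu-1}P(x)$ with $P\in P_N$, since $\{L_n^{\lambda+\nu-1}(2\sigma x)\}_{n=0}^N$ spans $P_N$. The powers of $x$ and the exponentials then telescope inside the norm to give
\begin{equation*}
\|f-w\|_{\hat{w}_1}^2 = \int_0^\infty (g_f(x)-P(x))^2\, x^{\lambda+\nu-1} e^{-2\sigma x}\,dx.
\end{equation*}
The linear substitution $y=2\sigma x$ converts the right side (up to a $\sigma$-dependent multiplicative constant) into $\|\tilde g_f - \tilde P\|_{w^{\lambda+\nu-1}}^2$, where $\tilde g_f(y):=g_f(y/(2\sigma))$ and $\tilde P \in P_N$ is arbitrary; here $w^{\lambda+\nu-1}(y)=y^{\lambda+\nu-1}e^{-y}$ is precisely the weight appearing in Lemma \ref{Newlem1}.

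Finally I would choose $\tilde P = \Pi_N^{\lambda+\nu-1}\tilde g_f$ and apply Lemma \ref{Newlem1} with $\alpha=\lambda+\nu-1$ and derivative index $0$, obtaining a bound of order $C N^{-m/2}\|\partial_y^m \tilde g_f\|_{w^{\lambda+\nu+m-1}}$. Transferring back to the variable $x$ via $\partial_y^m\tilde g_f = (2\sigma)^{-m}(\partial_x^m g_f)(y/(2\sigma))$ and absorbing all $\sigma$-dependent factors into $C$ delivers the claimed estimate. The main obstacle is the first step, namely the orthogonal-projection identification: it rests on the fact that $D_s^{1-\nu}$ acts diagonally on $\{\hat L_n^\lambda\}$, mapping it into $\{\hat L_n^{\lambda+\nu-1}\}$, so that the image of $U_1$ under $D_s^{1-\nu}$ coincides as a set with $V_1$ despite the differing scaling factors in their bases. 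Once this is in hand, the remainder of the argument is a change of variables followed by a direct application of Lemma \ref{Newlem1}.
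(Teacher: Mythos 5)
Your proposal is correct and follows essentially the same route as the paper: identify $D_s^{1-\nu}u_N$ as the $\hat{w}_1$-weighted orthogonal projection of $f$ onto $\mathrm{span}\{\hat L_n^{\lambda+\nu-1}\}_{n=0}^N$, peel off the factor $x^{\lambda+\nu-1}e^{-\sigma x}$ to reduce the problem to polynomial approximation of $g_f$ in $L^2_{w^{\lambda+\nu-1}}$, and conclude via Lemma \ref{Newlem1} with derivative index $0$. Your explicit handling of the $2\sigma$ rescaling is slightly more careful than the paper's, and you correctly observe that the method degenerates to a plain Galerkin projection (the paper reaches the same conclusion after a short detour through the regular-pair framework), but the substance is identical.
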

\begin{proof}
For any $\psi=x^{\lambda+\nu-1}e^{-\sigma x}v_N(x)\in V_1$, we have
\begin{equation}
(f-D_s^{1-\nu}u_N,\psi)_{\hat{w}_1}=0.
\end{equation}
We therefore deduce that $D_s^{1-\nu}u_N=\hat{\pi}_Nf$,
which  implies our problem is equivalent to find the best
approximation for $f$ in
\begin{equation}
X_n:=span\{x_n\}=span\{D_s^{1-\nu} \hat{L}_n^{\lambda} \}
=span\bigg\{ \frac{\Gamma(\lambda+n+1)}{\Gamma(\nu+\lambda+n)}\hat{L}_n^{\lambda+\nu-1} \bigg\}
\end{equation}
  by testing on $Y_n=V_1$.
   Define
\begin{equation}
\Pi_n x_n=y_n.
\end{equation}
A simple calculation shows that Lemma \ref{lem1} holds for our $X_n$ and $Y_n$ and furthermore, $\{X_n, Y_n\}$ is a regular pair with $C_1, C_2=\mathcal{O}(1)$.

From the definition of $\hat{L}_n$, we specify the operator
\begin{equation}
(\hat{\pi}_N f)(x):=x^{\lambda+\nu-1}e^{-\sigma x}(\pi_Ng_f)(x).
\end{equation}
Therefore,
\begin{equation}
0 = (\hat{\pi}_Nf-f, \psi)_{\hat{w}_1} = (\pi_Ng_f-g_f, v_N)_{w^{\lambda+\nu-1}}, \quad \forall \psi\in P_N.
\end{equation}
  This means our method essentially turns out to be a Galerkin method and it is equivalent to find a polynomial projection for the function
  $f(x)e^{\sigma x}x^{1-\lambda-\nu}$ with respect to the weight $x^{\lambda+\nu-1}e^{-2\sigma x}$.

Therefore, by Lemma \ref{Newlem1},
\begin{equation}
\|D_s^{1-\nu}(u-u_N)\|^2_{\hat{w}_1}=\|f-\hat{\pi}_Nf\|^2_{\hat{w}_1}
=\|\pi_Ng_f-g_f\|^2_{w^{\lambda+\nu-1}} \leq CN^{-m}\|\partial_x^mg\|_{w^{\lambda+\nu+m-1}}.
\end{equation}
\end{proof}

Similarly, for the fractional diffusion case, we have
\begin{theorem}
Let $u$ be  the solution of \eqref{eqPG2} and $u_N$ be its corresponding Petrov-Galerkin approximation. If
$g_f(x):=f(x)e^{\sigma x}x^{2-\lambda-\nu}\in B^m_{w^{\lambda+\nu-2}}([0,\infty)$, then
\begin{equation}
\|D_s^{2-\nu}(u-u_N)\|\leq CN^{-m/2}\|\partial_x^mg_f|_{w^{\lambda+\nu+m-2}}.
 \end{equation}
\end{theorem}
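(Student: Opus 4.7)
The plan is to mirror the proof of Theorem \ref{th:eqPG1} step by step, adapting each ingredient from the advection setup $(D_s^{1-\nu}, V_1, \hat{w}_1)$ to the diffusion setup $(D_s^{2-\nu}, V_2, \hat{w}_2)$, and then invoking identity \eqref{essentialI2} in place of \eqref{essentialI1}. First, for any test function $\psi = x^{\lambda+\nu-2} e^{-\sigma x} v_N(x) \in V_2$, the Petrov-Galerkin orthogonality gives $(f - D_s^{2-\nu} u_N, \psi)_{\hat{w}_2} = 0$, so $D_s^{2-\nu} u_N = \hat{\pi}_N f$ for a suitable projection $\hat{\pi}_N$ onto the space $X_N := \mathrm{span}\{D_s^{2-\nu}\hat{L}_n^\lambda\}_{n=0}^N$. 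By \eqref{essentialI2}, $X_N = \mathrm{span}\{\hat{L}_n^{\lambda+\nu-2}\}_{n=0}^N$ (the scalars $\Gamma(\lambda+n+1)/\Gamma(n-1+\lambda+\nu)$ being nonzero), and $Y_N := V_2 = \mathrm{span}\{(n+1)^{-(\lambda+\nu-2)/2}\hat{L}_n^{\lambda+\nu-2}\}_{n=0}^N$.

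Second, I would verify that $\{X_N, Y_N\}$ satisfies condition (H) and is a regular pair in the sense of \cite{XuYS}. The natural map $\Pi_N: X_N \to Y_N$ that sends $\hat{L}_n^{\lambda+\nu-2}$ to a constant multiple of itself (absorbing the scaling factor $(n+1)^{-(\lambda+\nu-2)/2}$) is constructed exactly as in the proof of Theorem \ref{th:eqPG1}. Condition (a) will follow from the orthogonality identity for the extended Laguerre polynomials together with the explicit form of the diagonal matrix $A$ computed in Subsection 3.2, and condition (b) is the reciprocal bound; the role of the scaling factors $(n+1)^{-(\lambda-\nu+2)/2}$ and $(n+1)^{-(\lambda+\nu-2)/2}$ is precisely to make the constants $C_1, C_2 = \mathcal{O}(1)$.

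Third, I would set $(\hat{\pi}_N f)(x) := x^{\lambda+\nu-2} e^{-\sigma x} (\pi_N g_f)(x)$ and observe that the Petrov-Galerkin equation
\begin{equation*}
0 = (\hat{\pi}_N f - f, \psi)_{\hat{w}_2} = (\pi_N g_f - g_f, v_N)_{w^{\lambda+\nu-2}}, \quad \forall v_N \in P_N,
\end{equation*}
reduces the method to the standard weighted $L^2$ polynomial projection of $g_f$ against the weight $w^{\lambda+\nu-2} = x^{\lambda+\nu-2} e^{-x}$ (after the change of variable built into $\hat{L}_n^{\lambda+\nu-2}$). Applying Lemma \ref{Newlem1} to $g_f \in B^m_{w^{\lambda+\nu-2}}$ then yields
\begin{equation*}
\|D_s^{2-\nu}(u-u_N)\|_{\hat{w}_2}^2 = \|f - \hat{\pi}_N f\|_{\hat{w}_2}^2 = \|g_f - \pi_N g_f\|_{w^{\lambda+\nu-2}}^2 \leq C N^{-m} \|\partial_x^m g_f\|_{w^{\lambda+\nu+m-2}}^2,
\end{equation*}
which is the desired bound.

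The main obstacle is that for small $\lambda$ the index $\lambda+\nu-2$ lies in $(-2,-1]$, so the standard Laguerre orthogonality theory does not directly apply and one must lean on the extended Laguerre framework of Subsection 2.2. In particular, verifying the regular-pair bounds and invoking Lemma \ref{Newlem1} in the genuinely singular range of the weight requires that the extended orthogonality \eqref{ex:der} and the projection estimate have already been proved uniformly in $\alpha \in (-2,-1]$; once those are in hand, the rest of the argument is essentially symbolic and parallels Theorem \ref{th:eqPG1}.
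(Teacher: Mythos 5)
Your proposal is correct and follows exactly the route the paper intends: the paper's own proof of this theorem consists of the single remark that the boundary condition is automatically satisfied and that ``the analysis is the same as that for the previous theorem and thus omitted,'' and your step-by-step transcription of the Theorem~\ref{th:eqPG1} argument (with \eqref{essentialI2} replacing \eqref{essentialI1}, $V_2$, $\hat{w}_2$, and the weight $w^{\lambda+\nu-2}$) is precisely that omitted analysis. Your closing observation that the singular index range $\lambda+\nu-2\in(-2,-1]$ is what forces reliance on the extended Laguerre orthogonality \eqref{ex:der} and on Lemma~\ref{Newlem1} is exactly why the paper develops those tools in Section~2.
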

\begin{proof} The boundary condition has been satisfied automatically by our method. The analysis is the same as
that for the previous theorem and thus omitted. \end{proof}
\subsection{Numerical experiments}
\begin{example}
We first consider the equation \eqref{eqPG1} with $f(x)=B(7.3, \nu)/\Gamma(\nu)(\nu+6.3)x^{5.3+\nu}e^{-\sigma x}$ and $\sigma=2$.
Through the error estimate in Theorem 1, we conclude that the convergence rate of our Petrov-Galerkin method depends on
the regularity of the function $g(x)=f(x)e^{\sigma x}x^{1-\lambda-\nu}$ instead of $f(x)$ itself.  Therefore, we are allowed to adjust the parameter $\lambda$ according to the given data $f$ to maximize the smoothness of $g$.
In this case, $\lambda=0.3$ leads to an entire function $g(x)$, which further indicates an enhanced convergence rate. Indeed, for $\lambda=0.3$, the true solution sets root in our approximation space $U_1$ after a small $N$, and therefore only round-off errors are left. However, for other choice of $\lambda$\rq{}s, algebraic convergence rate is observed as the theorem predicts, see Figure 1. We also observe a convergence rate of $\mathcal{O}(N^{-7})$, which is better than our theoretical prediction  $\mathcal{O}(N^{-5.5})$ since $g(x)\in B^{11}_{w^{\lambda+\nu-1}}([0,\infty))$ for all $0<\lambda,\nu<1$.  In the experiment, the right hand side $F$ of our algorithm is approximated by $2N$-point Gauss-Laguerre numerical quadrature for each different $N$.
\begin{figure}[hp]
\thispagestyle{empty}
\centering
\resizebox{160mm}{75mm}{\includegraphics{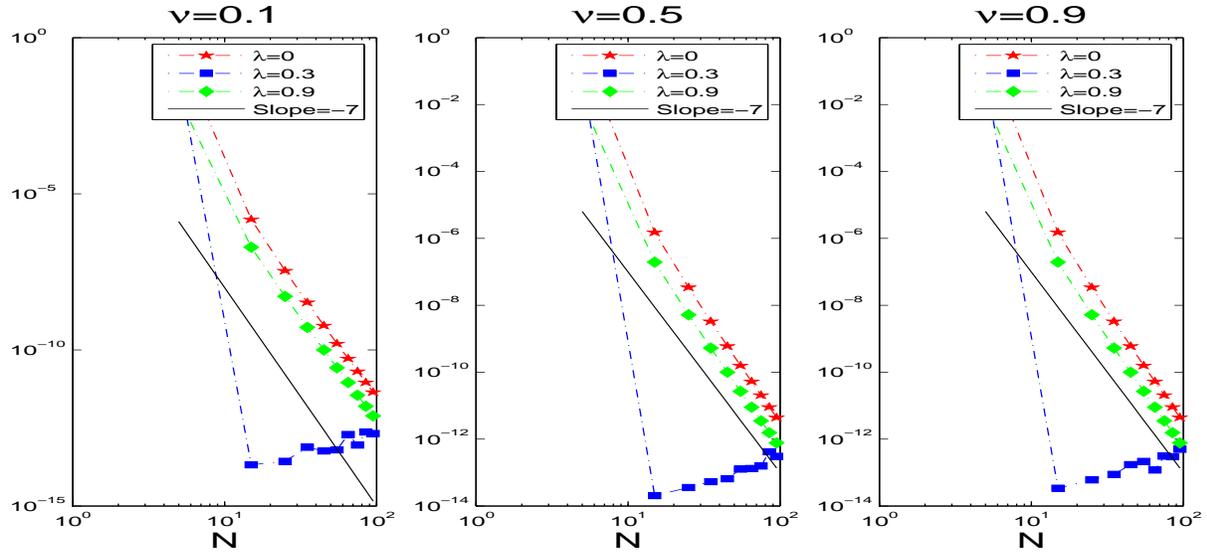}}
\caption{(Example 1): $L^\infty$ norm of numerical errors  to $D_{s}^{1-\nu} u(x)=f(x),\ x\in [0,\infty), \lim\limits_{x\to\infty}u(x)=0$,  for spectral Petrov-Galerkin method. }
\end{figure}
\end{example}

\begin{example}
Next, we consider equation \eqref{eqPG2} with true solution $f(x)=B(5.1, \nu)/\Gamma(\nu)(\nu+4.1)(\nu+3.1)x^{2.1+\nu}e^{-\sigma x}$ and $\sigma=2$.  Theorem 2 indicates that the convergence rate depends on the regularity of $g(x)=f(x)e^{\sigma x}x^{2-\lambda-\nu}\in B^{6}_{w^{\lambda+\nu-2}}([0,\infty))$ for all $0<\lambda,\nu<1$.  As expected, we only observe round-off errors for $\lambda=0.1$ after a small $N$. For  for other choice of $\lambda$\rq{}s, we observe algebraic convergence rate $\mathcal{O}(N^{-4})$, which is also better than our theoretical prediction $\mathcal{O}(N^{-3})$, see Figure 2 for details.   
\begin{figure}[tp]
\thispagestyle{empty}
\centering
\resizebox{160mm}{75mm}{\includegraphics{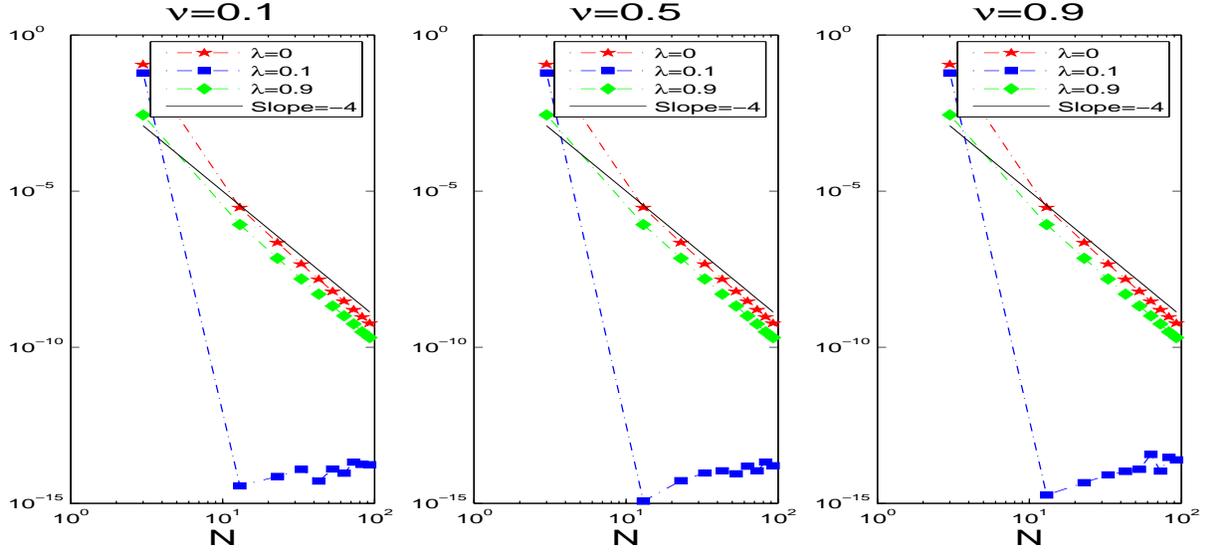}}
\caption{(Example 2): $L^\infty$ norm of numerical errors to $D_s^{2-\nu} u(x)=f(x),\ x\in [0,\infty), u(0)=0, \lim\limits_{x\to\infty}u(x)=0$ for spectral Petrov-Galerkin method. }
\end{figure}
\end{example}

\section{Substantial collocation matrix}
In this section, we elaborate on the construction of substantial collocation matrices regarding to \eqref{eqPG1} and \eqref{eqPG2}.
\subsection{Substantial fractional advection equation}
In our collocation method, we seek approximation of $u$
\begin{equation}
 u_N \in \text{span}\ \{\hat{L}_n^\lambda(x), 0\leq n\leq N\}
  \end{equation}
$x\in [0,\infty)$ of the form
\begin{equation}
u_N(x)=\sum\limits_{k=0}^N c_k \hat{L}_k^\lambda(x).
\end{equation}
For the sake of derivation of collocation matrix, we rewrite $u_N$ in nodal expansion.
 \begin{equation}
  u_N(x)=\sum\limits_{j=0}^N u(x_j) h_j(x),
 \end{equation}
 where $h_j(x)$ is our interpolant function defined as
 \begin{equation}
 \displaystyle{h_j(x)=\frac{x^\lambda e^{-\sigma x}}{x_j^\lambda e^{-\sigma x_j}}\prod\limits_{i=0, i\neq j}^N \frac{(x-x_i)}{(x_j-x_i)}}:=\frac{x^\lambda e^{-\sigma x}}{x_j^\lambda e^{-\sigma x_j}} l_j(x), \ 0\leq j\leq N.
\end{equation}
The associated points $\{x_j\}_{j=0}^N$ is  the Laguerre-Gauss points with respect to the weight $x^{\lambda+\nu-1}e^{-2\sigma x}$.

 Thereby, from \eqref{essentialI1} and the initial condition
\begin{eqnarray}
 D_s^{1-\nu} u_N(x)&=&\sum\limits_{j=1}^N u(x_j) D_s^{1-\nu} h_j(x)\\
                       &=&\sum\limits_{j=1}^N u(x_j) \frac{1}{x_j^\lambda e^{-\sigma x_j}} \sum\limits_{k=0}^N \beta_k^j [D_s^{1-\nu}\hat{L}_k^\lambda(x)]\nonumber\\
 &=& \sum\limits_{j=1}^N u(x_j) \frac{1}{x_j^\lambda e^{-\sigma x_j}} \sum\limits_{k=0}^N \beta_k^j  \frac{\Gamma(\lambda+k+1)}{\Gamma(\nu+\lambda+k)}\hat{L}_k^{\lambda+\nu-1}(x). \nonumber
\end{eqnarray}
Consequently, we evaluate the $D_s^{1-\nu} u_N(x)$ at collocation points and obtain
\begin{eqnarray}
D_s^{1-\nu} u_N(x)\bigg|_{x=x_i}&=& \sum\limits_{j=1}^N u(x_j) \frac{1}{x_j^\lambda e^{-\sigma x_j}} \sum\limits_{k=0}^N \beta_k^j  \frac{\Gamma(\lambda+k+1)}{\Gamma(\nu+\lambda+k)}\hat{L}_k^{\lambda+\nu-1}(x_i)\nonumber\\
                               &=&\sum\limits_{j=1}^N {\bf D}_{ij}  u(x_j),
 \end{eqnarray}
 where ${\bf D}_{ij}$ are the entries of the $N\times N$ collocation matrix $D$.

Next, let us find an explicit expression for $\beta_k^j$ such that $l_j(x)=\sum\limits_{k=0}^N \beta_k^j L_k^\lambda(2\sigma x)$.  It is clear that
\begin{equation}
l_j(x)=\sum\limits_{k=0}^N\alpha_k^jL_k^{\lambda+\nu-1}(2\sigma x),
\end{equation}
where
\begin{eqnarray}
\alpha_k^j&=&\frac{1}{\|L_k^{\lambda+\nu-1}(2\sigma x)\|_{w^{\lambda+\nu-1}}^2}\int_0^\infty l_j(x)L_k^{\lambda+\nu-1}(2\sigma x)x^{\lambda+\nu-1}e^{-2\sigma x}dx\nonumber\\
&=& \frac{(2\sigma)^{\lambda+\nu}\Gamma(k+1)}{\Gamma(k+\lambda+\nu)} w_jL_k^{\lambda+\nu-1}(2\sigma x_j)
\end{eqnarray}
since $(N+1)$-point Laguerre-Gauss quadrature is exact for all polynomials up to order $2N$. Furthermore, we denote
\begin{eqnarray}
L_k^{\lambda+\nu-1}(2\sigma x)&=&\sum\limits_{i=0}^k C_i^kL_i^\lambda(2\sigma x).
\end{eqnarray}
By a similar fashion,
\begin{eqnarray}
C_i^k&=&\frac{1}{\|L_i^{\lambda}(2\sigma x)\|_{w^{\lambda}}^2}\int_0^\infty L_i^\lambda(2\sigma x)L_k^{\lambda+\nu-1}(2\sigma x)x^{\lambda}e^{-2\sigma x}dx\nonumber\\
&=& \frac{\Gamma(i+1)}{\Gamma(i+\lambda+1)}\sum\limits_{n=0}^N w_n^\lambda L_k^{\lambda+\nu-1}(x_n^\lambda)L_i^\lambda(x_n^\lambda),
\end{eqnarray}
where $\{x_n^\lambda, w_n^\lambda\}_{n=0}^N$ are Laguerre-Gauss quadrature points and weights with respect to weight $x^\lambda e^{-x}$. We thereby obtain
\begin{eqnarray} \label{beta}
\beta_k^j&=& \sum\limits_{i=k}^NC_k^i\alpha_i^j.
  \end{eqnarray}

Hence, we obtain a closed form of the collocation matrix
\begin{equation} \label{advmatrix}
 {\bf D}_{ij}=\frac{w_j}{x_j^\lambda e^{-\sigma x_j}} \sum\limits_{k=0}^N  \beta_k^j\frac{\Gamma(\lambda+k+1)}{\Gamma(\nu+\lambda+k)}\hat{L}^{\lambda+\nu-1}_k(x_i).
 \end{equation}

\subsection{Substantial fractional diffusion equation}

In this subsection, we consider the spectral collocation method for \eqref{eqPG2}.  Like the previous subsection, we approximate $u$ by
\begin{equation}
u_N(x)=\sum\limits_{k=0}^{N-1} c_k \hat{L}_k^{\lambda}(x)
\end{equation}
such that it satisfies the initial condition. Let $\{x_i,w_i\}_{i=1}^N$ be the $(N+1)$-point Laguerre-Gauss-Radau points with respect to the weight $x^{\lambda}e^{-2\sigma x}$ with $0$ excluded. As before, we rewrite it in nodal expansion form
\begin{equation}
u_N(x)=\sum\limits_{j=1}^N u(x_j)h_j(x),
\end{equation}
where $h_j(x)$ is of the form
\begin{equation}
h_j(x)=\frac{x^{\lambda}e^{-\sigma x}}{x_j^{\lambda}e^{-\sigma x_j}}\prod\limits_{i=1,i\neq j}^N \bigg(\frac{x-x_i}{x_j-x_i}\bigg):=\frac{x^{\lambda}e^{-\sigma x}}{x_j^{\lambda}e^{-\sigma x_j}}l_j(x), 1\leq j \leq N.
\end{equation}
Hence, by \eqref{essentialI2},
\begin{eqnarray}
D_s^{2-\nu}u_N(x)&=& \sum\limits_{j=1}^N u(x_j) D_s^{2-\nu}h_j(x)\nonumber\\
&=& \sum\limits_{j=1}^N u(x_j) \frac{1}{x_j^{\lambda}e^{-2\sigma x_j}}\sum\limits_{k=0}^{N-1} \beta_k^j D_s^{2-\nu}[\hat{L}_k^{\lambda}(x)]\nonumber\\
&=& \sum\limits_{j=1}^N u(x_j) \frac{1}{x_j^{\lambda}e^{-2\sigma x_j}}\sum\limits_{k=0}^{N-1} \beta_k^j \frac{\Gamma(\lambda+k+1)}{\Gamma(\nu+\lambda+k-1)}\hat{L}_k^{\lambda+\nu-2}(x).
\end{eqnarray}
Since $0$ is excluded in the collocation points set, the way to find $\beta_k^j$ is different from \eqref{beta}.
\begin{eqnarray} \label{beta1}
\beta_k^j&=& \frac{(2\sigma)^{1+\lambda}k!}{\Gamma(k+\lambda+1)}\int_0^\infty l_j(x)L_k^{\lambda}(x)x^{\lambda}e^{-2\sigma x}dx, \nonumber\\
&=& \frac{(2\sigma)^{1+\lambda}k!}{\Gamma(k+\lambda+1)} \bigg[w_0l_j(0)\frac{\Gamma(k+\lambda+1)}{k!\Gamma(\lambda+1)}+w_jL_k^{\lambda}(2\sigma x_j)\bigg].                                                                                                                                                                                                                                                                                                                             \end{eqnarray}
From the orthogonality of Laguerre polynomials and the fact that the Laguerre-Gauss-Radau is exact for all polynomial of order up to $2N$.
\begin{eqnarray}\label{eq:lj0}
0&=& \int_0^\infty l_j(x)L_N^{\lambda}(2\sigma x)x^{\lambda}e^{-2\sigma x}dx,\nonumber\\
&=& w_0l_j(0)\frac{\Gamma(N+\lambda+1)}{N!\Gamma(\lambda+1)}+w_jL_N^{\lambda}(2\sigma x_j).
\end{eqnarray}
Solve for $l_j(0)$ from \eqref{eq:lj0} and substitute it into  \eqref{beta1},
\begin{equation}\label{beta1value}
\beta_k^j=(2\sigma)^{\lambda+1}w_j\bigg[\frac{k!L_k^{\lambda}(2\sigma x_j)}{\Gamma(k+\lambda+1)}-\frac{N!L_N^{\lambda}(2\sigma x_j)}{\Gamma(N+\lambda+1)}\bigg].
\end{equation}
We then obtain the collocation matrix
\begin{eqnarray}\label{diffusmatrix}
D_{ij}&=& \frac{w_j(2\sigma)^{\lambda+1}}{x_j^{\lambda}e^{-\sigma x_j}}\sum\limits_{k=0}^{N-1}
\bigg[\frac{k!L_k^{\lambda}(2\sigma x_j)}{\Gamma(k+\lambda+1)}-\frac{N!L_N^{\lambda}(2\sigma x_j)}{\Gamma(N+\lambda+1)}\bigg] \frac{\Gamma(\lambda+k+1)}{\Gamma(\nu+\lambda+k-1)}\hat{L}_k^{\lambda+\nu-2}(x_i)
\end{eqnarray}
where $1\leq i,j \leq N.$

\begin{remark}
 The computational cost of obtaining the $D$ matrix from (\ref{advmatrix}) or (\ref{diffusmatrix}) may be high since it is a full matrix in general. However, this task can be done once for all. The strength is remarkable if one solves a system of fractional differential equations repeatedly.
\end{remark}

\subsection{Convergence analysis}
In this subsection, we develop convergence analysis for collocation method of advection equation and diffusion equation separately because the former is essentially a Galerkin method and is associated with the analysis in section 3 whereas the analysis of the latter is relatively new.

\subsubsection{Fractional advection equation}
Before start, we introduce an estimate on the interpolation error on Gauss-Laguerre points.
\begin{lemma}\cite[Page 272]{STW11}\label{intererror}
Let $\alpha>-1$. If $u\in C([0,\infty))\cap B_{w^\alpha}^m([0,\infty))$ and $\partial_xu\in B_{w^\alpha}^{m-1}([0,\infty))$ with $1\leq m\leq N+1$, then
\begin{equation}
\|I_N^\alpha u-u\|_{w^\alpha}\leq C\sqrt{\frac{(N-m+1)!}{N!}}(\|\partial_x^m u\|_{w^{\alpha+m-1}}+(\ln N)^{1/2}\|\partial_x^m u\|_{w^{\alpha+m}}),
\end{equation}
where $I_N^\alpha$ is the interpolation operator on $(N+1)$-Gauss-Laguerre points with respect to $x^\alpha e^{-x}$.
\end{lemma}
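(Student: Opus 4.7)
The plan is to split the interpolation error through the weighted $L^2$-orthogonal projection $\pi_N^\alpha$ onto $P_N$ already used in Lemma \ref{Newlem1}: writing
\[
u - I_N^\alpha u = (u - \pi_N^\alpha u) + I_N^\alpha(\pi_N^\alpha u - u),
\]
the triangle inequality yields $\|u - I_N^\alpha u\|_{w^\alpha} \le \|u - \pi_N^\alpha u\|_{w^\alpha} + \|I_N^\alpha g\|_{w^\alpha}$ with $g := u - \pi_N^\alpha u$. The first piece is handled exactly as in the proof of Lemma \ref{Newlem1}: expanding $g$ in the orthogonal Laguerre basis and pulling out the ratio $\gamma_{N+1,0}/\gamma_{N+1,m}$ from \eqref{ex:der} produces the $\sqrt{(N-m+1)!/N!}$ prefactor multiplying $\|\partial_x^m u\|_{w^{\alpha+m}}$, which matches the second summand of the asserted bound (modulo the logarithmic factor).

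For the residual polynomial piece $I_N^\alpha g \in P_N$, exactness of the $(N+1)$-point Gauss-Laguerre quadrature rule on $P_{2N+1}$ supplies the discrete identity
\[
\|I_N^\alpha g\|_{w^\alpha}^2 = \sum_{i=0}^N w_i\, g(x_i)^2,
\]
reducing the task to pointwise control of $g$ at the Laguerre abscissae. Using the representation $g(x_i) = -\int_{x_i}^\infty \partial_\tau g(\tau)\,d\tau$ (valid since $g \in B^m_{w^\alpha}$ forces decay at infinity) and Cauchy-Schwarz against suitable Laguerre weights, one derives pointwise bounds schematically of the form $|g(x_i)|^2 \lesssim x_i^{-\alpha-1}e^{x_i}\|\partial_x g\|_{w^{\alpha+1}}\cdot\|g\|_{w^{\alpha-1}}$. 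Combined with the Laguerre-Gauss weight asymptotics $w_i \asymp x_i^\alpha e^{-x_i}$ on interior nodes, the bulk contribution collapses to a controlled multiple of $\|g\|_{w^{\alpha-1}}\|\partial_x g\|_{w^{\alpha+1}}$, which by iterated application of Lemma \ref{Newlem1} through $m$ derivatives of $u$ delivers the first summand $\|\partial_x^m u\|_{w^{\alpha+m-1}}$.

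The principal technical obstacle is the tail node $x_N \sim 4N$: there $w_N$ is exponentially small, so the quotient $w_N e^{x_N}$ does not satisfy the interior asymptotic, and a finer Christoffel-type estimate shows that its contribution is bounded only by $\ln N$ rather than by $O(1)$. Splitting the discrete sum into a bulk part, where the Sobolev-type argument above applies, and a single boundary term at $i = N$, the boundary term produces the logarithmic excess attached to the higher-weight derivative norm $\|\partial_x^m u\|_{w^{\alpha+m}}$. Square rooting yields the $(\ln N)^{1/2}$ factor, and assembling the three contributions -- the common prefactor from the projection step, the bulk Sobolev term, and the logarithmic boundary term -- produces the stated inequality. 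The delicate point throughout is keeping the boundary contribution on the higher-weight norm only, since a $\ln N$ factor on $\|\partial_x^m u\|_{w^{\alpha+m-1}}$ as well would be weaker than what is claimed.
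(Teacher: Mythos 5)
First, a point of comparison: the paper does not prove this lemma at all --- it is quoted verbatim from \cite[Page 272]{STW11} --- so the benchmark is the textbook argument, not anything in the paper. Your skeleton (split $u-I_N^\alpha u$ through the weighted $L^2$ projection, use exactness of the Gauss--Laguerre rule on $P_{2N}$ to convert $\|I_N^\alpha g\|_{w^\alpha}^2$ into the discrete sum $\sum_i w_i g(x_i)^2$, then control the nodal values by a weighted Sobolev inequality combined with Christoffel-weight asymptotics) is indeed the route taken in that reference, and the bookkeeping of the $\sqrt{(N-m+1)!/N!}$ prefactor from $\gamma_{N+1,0}/\gamma_{N+1,m}$ is correct.

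There are, however, two concrete errors in the middle of the argument. First, the representation $g(x_i)=-\int_{x_i}^\infty \partial_\tau g(\tau)\,d\tau$ is not available: $g=u-\pi_N^\alpha u$ contains the polynomial $\pi_N^\alpha u$, and membership in $B^m_{w^\alpha}$ with $w^\alpha=x^\alpha e^{-x}$ imposes no decay on $g$ itself (e.g.\ $v(x)=x$ lies in every $B^m_{w^\alpha}$). The correct maneuver is to integrate the derivative of the \emph{weighted} quantity, $x^{\alpha+1}e^{-x}g^2(x)=-\int_x^\infty \frac{d}{d\tau}\bigl[\tau^{\alpha+1}e^{-\tau}g^2(\tau)\bigr]d\tau$, which does vanish at infinity and, after Cauchy--Schwarz, yields the pointwise bound you want. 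Second, the logarithm is misattributed. A single node cannot produce a $\ln N$; the factor comes from the \emph{collective} sum over the nodes clustering quadratically near the origin: with $w_i\sim x_i^{\alpha}e^{-x_i}\sqrt{x_i/N}$ in the bulk and $x_i\sim i^2/N$ for small $i$, one gets $\sum_i w_i x_i^{-\alpha-1}e^{x_i}\sim N^{-1/2}\sum_i x_i^{-1/2}\sim\sum_i i^{-1}\sim\ln N$, whereas the contribution of the tail node $x_N\sim 4N$ is $O(N^{-1})$ and harmless. Relatedly, your claim that ``iterated application of Lemma \ref{Newlem1}'' produces the lower-weight norm $\|\partial_x^m u\|_{w^{\alpha+m-1}}$ does not go through as stated: Lemma \ref{Newlem1} always returns the weight $w^{\alpha+r}$ on $\partial_x^r u$; the index $\alpha+m-1$ is obtained by applying the projection estimate to $v=\partial_x u$ with $r=m-1$, which is precisely why the lemma carries the separate hypothesis $\partial_x u\in B^{m-1}_{w^\alpha}$ that your sketch never uses.
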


\begin{theorem}
 Let $u$ and $u_N$ be the solution of \eqref{eqPG1} and its collocation method with the $D$ matrix given by \eqref{advmatrix}, respectively. Let $g_f(x) = f(x)e^{\sigma x}x^{1-\lambda-\nu} \in B^m_{w^{\lambda+\nu-1}}([0,\infty))$.
 Then
\begin{equation}
\|D_s^{1-\nu}(u-u_N)\|_{\hat{w}^{1-\lambda-\nu}}\leq CN^{-m/2}(\|\partial_x^mg_f\|_{w^{\lambda+\nu+m-2}}
+ (\ln N)^{1/2}\|\partial_x^mg_f\|_{w^{\lambda+\nu+m-1}}).
\end{equation}
\end{theorem}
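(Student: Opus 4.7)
The plan is to reduce the collocation scheme to a Galerkin‐type identity in which $D_s^{1-\nu} u_N$ is literally the Gauss–Laguerre interpolant of $f$, and then bound the resulting interpolation error using Lemma \ref{intererror}.

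First, I would use the trial representation $u_N(x)=\sum_{k=0}^N c_k\hat{L}_k^{\lambda}(x)$ and the identity \eqref{essentialI1} to write
\[
D_s^{1-\nu}u_N(x)=x^{\lambda+\nu-1}e^{-\sigma x}p_N(x),
\qquad
p_N(x):=\sum_{k=0}^N c_k\frac{\Gamma(\lambda+k+1)}{\Gamma(\nu+\lambda+k)}L_k^{\lambda+\nu-1}(2\sigma x)\in P_N.
\]
By construction of the collocation matrix \eqref{advmatrix}, the collocation equations are $D_s^{1-\nu}u_N(x_i)=f(x_i)$ at the $(N+1)$ Gauss–Laguerre nodes $\{x_i\}$ associated with the weight $x^{\lambda+\nu-1}e^{-2\sigma x}$. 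Using $f(x)=x^{\lambda+\nu-1}e^{-\sigma x}g_f(x)$, the collocation equations reduce to $p_N(x_i)=g_f(x_i)$ for every $i$. Since $p_N\in P_N$, this is exactly $p_N=I_N^{\lambda+\nu-1}g_f$, where $I_N^{\lambda+\nu-1}$ denotes Gauss–Laguerre interpolation on those nodes.

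Second, combining the PDE $D_s^{1-\nu}u=f$ with the above identification gives the clean error formula
\[
D_s^{1-\nu}(u-u_N)(x)=x^{\lambda+\nu-1}e^{-\sigma x}\bigl(g_f(x)-I_N^{\lambda+\nu-1}g_f(x)\bigr).
\]
Taking the $\hat{w}^{1-\lambda-\nu}$-weighted $L^2$-norm and using $\hat{w}^{1-\lambda-\nu}(x)=x^{1-\lambda-\nu}$ collapses the factor $x^{\lambda+\nu-1}e^{-\sigma x}$ into the Laguerre weight, yielding (after the standard $y=2\sigma x$ rescaling)
\[
\|D_s^{1-\nu}(u-u_N)\|_{\hat{w}^{1-\lambda-\nu}}
\;=\;C_\sigma\,\|g_f-I_N^{\lambda+\nu-1}g_f\|_{w^{\lambda+\nu-1}},
\]
where $C_\sigma$ absorbs the power of $2\sigma$ from the rescaling.

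Third, the hypothesis $0<\nu<1$ together with $\lambda>0$ (say, the range under which the basis is defined) guarantees $\lambda+\nu-1>-1$, so Lemma \ref{intererror} applies with $\alpha=\lambda+\nu-1$ and the assumed regularity $g_f\in B^m_{w^{\lambda+\nu-1}}$:
\[
\|g_f-I_N^{\lambda+\nu-1}g_f\|_{w^{\lambda+\nu-1}}
\le C\sqrt{\tfrac{(N-m+1)!}{N!}}\bigl(\|\partial_x^m g_f\|_{w^{\lambda+\nu+m-2}}+(\ln N)^{1/2}\|\partial_x^m g_f\|_{w^{\lambda+\nu+m-1}}\bigr),
\]
and Stirling gives $\sqrt{(N-m+1)!/N!}\le CN^{-m/2}$, producing the stated bound.

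I do not expect any genuine obstacle; the one point to verify carefully is the bookkeeping between the Gauss–Laguerre weight $x^{\lambda+\nu-1}e^{-2\sigma x}$ used to build the collocation nodes and the canonical weight $x^{\lambda+\nu-1}e^{-x}$ in Lemma \ref{intererror}. This is resolved by the affine rescaling $y=2\sigma x$, which only alters the constant $C_\sigma$ and leaves the $N$-dependence untouched. The equality of $p_N$ with the interpolant of $g_f$ (the crux of the reduction to a pure interpolation estimate) hinges on $p_N$ actually lying in $P_N$ and on the fact that $(N+1)$ collocation conditions uniquely determine an element of $P_N$, both of which are transparent from the construction.
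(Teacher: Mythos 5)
Your argument is correct, and it takes a more direct route than the paper. The paper multiplies the collocation equations by $x_i^{1-\lambda-\nu}e^{\sigma x_i}L_n^{\lambda+\nu-1}(2\sigma x_i)w_i$, sums, recognizes the result as a perturbed Galerkin scheme $(D_s^{1-\nu}u_N,\hat L_n^{\lambda+\nu-1})_{\hat w_1}=(I_Ng_f,L_n^{\lambda+\nu-1}(2\sigma\cdot))_{w^{\lambda+\nu-1}}$, and then invokes Strang's first lemma to split the error into a best-approximation part (handled by Theorem \ref{th:eqPG1}) and a quadrature-consistency part (handled by Lemma \ref{intererror}). You instead observe that $D_s^{1-\nu}u_N=x^{\lambda+\nu-1}e^{-\sigma x}p_N$ with $p_N\in P_N$, so the $(N+1)$ pointwise conditions force $p_N=I_N^{\lambda+\nu-1}g_f$ exactly; the error is then \emph{identically} the weighted interpolation error of $g_f$, and Lemma \ref{intererror} finishes. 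This bypasses both Strang's lemma and Theorem \ref{th:eqPG1}, and it is in fact sharper, since the paper's splitting only recovers the same interpolation bound up to an extra best-approximation term. Your two flagged bookkeeping points are handled correctly: $\lambda+\nu-1>-1$ holds since $\lambda,\nu>0$, and the affine rescaling $y=2\sigma x$ only changes constants. One caveat you share with the paper rather than introduce yourself: $\sqrt{(N-m+1)!/N!}$ is $\Theta(N^{-(m-1)/2})$, not $O(N^{-m/2})$, so the stated rate loses a factor $N^{1/2}$ relative to what Lemma \ref{intererror} literally provides; this discrepancy is already present in the paper's own proof and is not a defect of your reduction.
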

\begin{proof}
Recall that we collocate the equation on $(N+1)$ Laguerre-Gauss points associated with weight $x^{\lambda+\nu-1}e^{-2\sigma x}$.
 \begin{equation}
D_s^{1-\nu}u_N(x_i)=f(x_i), \quad i=0,\cdots, N.
\end{equation}
Multiplying both sides of the above equation by $x_i^{1-\lambda-\nu}{L}_n^{\lambda+\nu-1}(x_i)w_i$ and summing up, we obtain
\begin{equation}
\sum\limits_{i=0}^N D_s^{1-\nu}u_N(x_i)x_i^{1-\lambda-\nu}e^{\sigma x_i}{L}_n^{\lambda+\nu-1}(2\sigma x_i)w_i=\sum\limits_{i=0}^N f(x_i)x_i^{1-\lambda-\nu}e^{\sigma x_i}{L}_n^{\lambda+\nu-1}(2\sigma x_i)w_i,
\end{equation}
which is equivalent to
\begin{equation}
( D_s^{1-\nu}u_N, \hat{L}_n^{\lambda+\nu-1})_{\hat{w}^{1-\lambda-\nu}}
=(g_f,{L}_n^{\lambda+\nu-1}(2\sigma x))_{w^{\lambda+\nu-1}}^*
=(I_Ng_f, L_n^{\lambda+\nu-1}(2\sigma x))_{w^{\lambda+\nu-1}}
\end{equation}
 from the exactness of the $(N+1)$-point Laguerre-Gauss quadrature. Here, $*$ indicates numerical quadrature.  Then, the Strang\rq{}s first lemma  leads to
\begin{eqnarray}
&&\|f-D_s^{1-\nu}u_N\|_{\hat{w}^{1-\lambda-\nu}}\leq C\bigg(\inf\limits_{f_N\in V_1}\|f-f_N\|_{\hat{w}^{1-\lambda-\nu}}+\sup\limits_{w_N\in V_1}\frac{|(f,w_N)_{\hat{w}^{1-\lambda-\nu}}-(f,w_N)^*|}{\|w_N\|_{\hat{w}^{1-\lambda-\nu}}}\bigg)\\
&\leq& C\bigg(\inf\limits_{f_N\in V_1}\|f-f_N\|_{\hat{w}^{1-\lambda-\nu}}+\sup\limits_{q_N\in P_N}\frac{|(g_f,q_N)_{w^{\lambda+\nu-1}}-(I_Ng_f,q_N)_{w^{\lambda+\nu-1}}|}{\|q_N\|_{w^{\lambda+\nu-1}}}\bigg)\notag\\
&\leq& C(\|f-\hat{\pi}_Nf\|_{\hat{w}^{1-\lambda-\nu}}+\|g_f-I_Ng_f\|_{w^{\lambda+\nu-1}}).
\end{eqnarray}
Then the result follows by Theorem \ref{th:eqPG1} and Lemma \ref{intererror}.
\end{proof}
\subsubsection{Fractional diffusion equation}
Unlike the previous subsection,  we collocate \eqref{eqPG2} on Laguerre-Gauss-Radau points $\{x_i\}_{i=0}^N$ with respect to the weight $x^\lambda e^{-2\sigma x}$ instead of $x^{\lambda+\nu-2}e^{-2\sigma x}$ because if $\lambda+\nu-2\leq -1$, those quadrature points and weights may not exist. Thereby our collocation method is an authentic Petrov-Galerkin method.

\begin{theorem}
 Let $u$ and $u_N$ be the solution of \eqref{eqPG2} and its collocation method with $D$ matrix given by \eqref{diffusmatrix}, respectively. Let $g_f(x) = f(x)e^{\sigma x}x^{2-\lambda-\nu} \in B^m_{w^{\lambda+\nu-2}}([0,\infty))$. Then
\begin{equation}
\|D_s^{2-\nu}(u-u_N)\|_{\hat{w}^{2-\lambda-\nu}} \leq CN^{-m/2}(\|\partial_x^mg_f\|_{\hat{w}^{\lambda+m-1}}
+ (\ln N)^{1/2}\|\partial_x^mg_f\|_{\hat{w}^{\lambda+m}}).
\end{equation}
\end{theorem}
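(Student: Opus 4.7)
The plan is to follow the pattern of the proof of Theorem 4 for the advection case, replacing the Laguerre-Gauss quantities by their Laguerre-Gauss-Radau counterparts and carefully handling the exclusion of the endpoint $x_0=0$ from the collocation nodes. First I would express the collocation residual explicitly: from \eqref{essentialI2} and the ansatz $u_N=\sum_{k=0}^{N-1}c_k\hat L_k^\lambda$, one has $D_s^{2-\nu}u_N(x)=x^{\lambda+\nu-2}e^{-\sigma x}Q(x)$ with $Q\in P_{N-1}$, so the collocation conditions $D_s^{2-\nu}u_N(x_i)=f(x_i)$ for $1\le i\le N$ are equivalent to $Q(x_i)=g_f(x_i)$, i.e., $Q$ is the polynomial interpolant of $g_f$ at the interior Radau nodes.

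Next I would recast the discrete system as a perturbed Petrov-Galerkin problem. Multiplying $D_s^{2-\nu}u_N(x_i)=f(x_i)$ by $w_i x_i^{2-\lambda-\nu}e^{\sigma x_i}L_n^{\lambda+\nu-2}(2\sigma x_i)$, summing over $i=1,\dots,N$, and invoking the exactness of the $(N+1)$-point Gauss-Radau rule on $P_{2N}$ converts the collocation system into a Galerkin-type identity in which $g_f$ is replaced by its Laguerre-Gauss-Radau interpolant $I_N^R g_f$. Strang's first lemma then yields
\begin{equation*}
\|D_s^{2-\nu}(u-u_N)\|_{\hat w^{2-\lambda-\nu}}\le C\bigl(\|f-\hat\pi_N f\|_{\hat w^{2-\lambda-\nu}}+\|g_f-I_N^R g_f\|_{w^\lambda}\bigr).
\end{equation*}
The first term is bounded by Theorem 2 (the Petrov-Galerkin estimate for the diffusion case), which contributes a clean $N^{-m/2}\|\partial_x^m g_f\|_{\hat w^{\lambda+m}}$. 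The second term is controlled by a Gauss-Radau analog of Lemma \ref{intererror}, which produces the two-term bound $CN^{-m/2}(\|\partial_x^m g_f\|_{\hat w^{\lambda+m-1}}+(\ln N)^{1/2}\|\partial_x^m g_f\|_{\hat w^{\lambda+m}})$. Combining the two bounds gives the claimed estimate.

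The main technical obstacle is establishing the Radau version of Lemma \ref{intererror}: Shen-Tang-Wang's estimate is stated only for the $(N+1)$-point Laguerre-Gauss interpolant, whereas here the Radau nodes include $x_0=0$ and the family of nodal polynomials satisfies a modified orthogonality relation. I would address this by decomposing
\begin{equation*}
g_f-I_N^R g_f=(g_f-\Pi_N^\lambda g_f)+(\Pi_N^\lambda g_f-I_N^R g_f),
\end{equation*}
bounding the first summand by Lemma \ref{Newlem1} and the second, which lies in $P_N$, by an inverse inequality combined with a pointwise estimate at the Radau node $x_0=0$; the $(\ln N)^{1/2}$ factor in the final bound traces back to the logarithmic Lebesgue constant associated with the Radau rule. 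An additional subtlety is that the natural Petrov-Galerkin weight $\hat w_2=x^{2-\lambda-\nu}$ does not coincide with the Radau quadrature weight $x^\lambda e^{-2\sigma x}$; this mismatch is reconciled through the factor $x^{\lambda+\nu-2}$ that appears in the representation of the residual together with the decay of $e^{-2\sigma x}$ at infinity. Once this Radau version of Lemma \ref{intererror} is in hand, the remainder of the argument is a direct transcription of the proof of Theorem 4 and is therefore omitted in the authors' style.
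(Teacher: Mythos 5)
Your overall skeleton (Strang's first lemma splitting the error into a best-approximation part and a quadrature-consistency part, with the latter controlled by a Laguerre interpolation estimate) matches the paper, but the execution diverges at the one point where the diffusion case genuinely differs from the advection case, and that divergence leaves a real gap. You test against $L_n^{\lambda+\nu-2}(2\sigma x_i)$, whereas the paper tests against $L_n^{\lambda}(2\sigma x_i)$. The paper's choice is forced: the quadrature weights $w_i$ are those of the Radau rule for $x^{\lambda}e^{-2\sigma x}$ (the rule for $x^{\lambda+\nu-2}e^{-2\sigma x}$ may not exist when $\lambda+\nu-2\leq -1$), so the discrete sums reproduce inner products with respect to $w^{\lambda}$, not $w^{\lambda+\nu-2}$. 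With your test functions the identity you obtain is $(Q,L_n^{\lambda+\nu-2})^{*}_{w^{\lambda}}=(g_f,L_n^{\lambda+\nu-2})^{*}_{w^{\lambda}}$, which is \emph{not} the Galerkin identity underlying Theorem 2 (that one pairs against $L_n^{\lambda+\nu-2}$ with weight $w^{\lambda+\nu-2}$), so your step ``the first term is bounded by Theorem 2'' does not go through. You flag the weight mismatch yourself but only assert that it ``is reconciled''; it is not reconciled by the factor $x^{\lambda+\nu-2}$ alone.

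The missing idea is the stability of the resulting \emph{genuine} Petrov--Galerkin pairing. In the paper's route the auxiliary continuous problem is $(f-\hat\pi_N f,\hat L_n^{\lambda})_{\hat w^{2-\lambda-\nu}}=0$ with trial space $X_n=\mathrm{span}\{\hat L_n^{\lambda+\nu-2}\}$ and test space $Y_n=\mathrm{span}\{\hat L_n^{\lambda}\}$; to conclude $\|f-\hat\pi_N f\|\leq C\|f-Q_N f\|$ one must verify that $\{X_n,Y_n\}$ is a regular pair (conditions (a) and (b) before Lemma 2), which the paper does in Appendix B by computing the asymptotics of $\|x_n\|^2$, $(x_n,y_n)$ and $\|y_n\|^2$ in the $\hat w^{2-\lambda-\nu}$ inner product and showing all three are $\mathcal O(n^{2+\lambda-\nu})$. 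This is the substantive new content of the diffusion proof relative to the advection proof, and it is absent from your proposal. Two smaller points: your observation that $Q\in P_{N-1}$ interpolates $g_f$ at the interior Radau nodes is correct and elegant, but it reduces the error to an interpolation error in the weight $w^{\lambda+\nu-2}$, for which no estimate is available when $\lambda+\nu-2\leq -1$ --- which is exactly why one cannot shortcut the Petrov--Galerkin machinery; and your concern about a Radau analogue of Lemma 4 is reasonable (the paper applies the Gauss-point lemma without comment), but it is a refinement of the paper's argument rather than a substitute for the missing stability step.
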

\begin{proof}
As before, we take the modal expansion of $u$
\begin{equation}
u_N(x):=\sum\limits_{j=0}^N u_N(x_j)h_j(x)=\sum\limits_{k=0}^N c_k\hat{L}_k^{\lambda}(2\sigma x)
 \end{equation}
and perform the collocation,
 \begin{equation}
D_s^{2-\nu}u_N(x_i)=f(x_i), \quad i=0,\cdots, N.
\end{equation}
Multiplying both sides of the above equation by $x_i^{2-\lambda-\nu}e^{\sigma x_i}{L}_n^{\lambda}(2\sigma x_i)w_i$ and summing up, we obtain
\begin{equation}
\sum\limits_{i=0}^N D_s^{2-\nu}u_N(x_i)x_i^{2-\lambda-\nu}e^{\sigma x_i}{L}_n^{\lambda}(2\sigma x_i)w_i=\sum\limits_{i=0}^N f(x_i)x_i^{2-\lambda-\nu}e^{\sigma x_i}{L}_n^{\lambda}(2\sigma x_i)w_i,
\end{equation}
which is equivalent to
\begin{equation}
(D_s^{2-\nu}u_N, \hat{L}_n^{\lambda})_{\hat{w}^{2-\lambda-\nu}}
=(g_f, {L}_n^{\lambda}(2\sigma x))_{w^{\lambda}}^*=(I_N g_f, L_n^{\lambda}(2\sigma x))_{w^{\lambda}}
\end{equation}

To proceed, we consider an auxiliary equation
\begin{equation}
(D_s^{2-\nu}\bar{u}_N, \hat{L}_n^{\lambda})_{\hat{w}^{2-\lambda-\nu}}=(f, \hat{L}_n^{\lambda})_{\hat{w}^{2-\lambda-\nu}}.
\end{equation}
Denote $ (D_s^{2-\nu}\bar{u}_N)(x):=(\hat{\pi}_Nf)(x)=x^{\lambda+\nu-2}e^{-2\sigma x}(\pi_Ng_f)(x)$. The problem is simplified to
\begin{equation}
(f-\hat{\pi}_Nf, \hat{L}_n^{\lambda})_{\hat{w}^{2-\lambda-\nu}}=0,
\end{equation}
which is clear a Petrov-Galerkin method with
\begin{eqnarray}
&&X_n=span\bigg\{\frac{\Gamma(n+\lambda+1)}{\Gamma(n-1+\lambda+\nu)}\hat{L}_n^{\lambda+\nu-2}(x)\bigg\}\nonumber\\ &&Y_n=\{\hat{L}_n^\lambda(x)\}
\end{eqnarray}
A careful calculation indicates that $\{X_n,Y_n\}$ is a regular pair, see Appendix B. By Lemma \ref{Newlem1} and Lemma \ref{lem2},
if $g_f\in B^m_{w^{2-\lambda-\nu}}([0,\infty))$,
\begin{equation}\label{eq:diffu1}
\|D_s^{2-\nu}u-D_s^{2-\nu}\bar{u}_N\|_{w^{2-\lambda-\nu}}=\|f-\hat{\pi}_Nf\|_{w^{2-\lambda-\nu}}\leq CN^{-m/2}\|\partial_x^mg_f\|_{w^{\lambda+\nu+m-1}}.
\end{equation}
Now, let us consider the effect of numerical integration. By Lemma \ref{intererror},
\begin{eqnarray}\label{eq:diffu2}
&&\sup\limits_{L_n^\lambda\in P_N}\frac{|(f, \hat{L}_n^{\lambda})_{\hat{w}^{2-\lambda-\nu}}
-(I_Ng_f, L_n^{\lambda}(2\sigma x))_{w^{\lambda}}|}{ \|L_n^\lambda\|_{w^\lambda}}
= \sup\limits_{L_n^\lambda\in P_N} \frac{|(g_f-I_Ng_f, L_n^{\lambda} (2\sigma x))_{w^{\lambda}}|} {\| L_n^\lambda \|_{w^\lambda}} \nonumber\\
&&\leq C\sqrt{\frac{(N-m+1)!}{N!}}(\|\partial_x^mg_f\|_{w^{\lambda+m-1}}+(\ln N)^{1/2}\|\partial_x^mg_f\|_{w^{\lambda+m}}),
\end{eqnarray}
Accounting of \eqref{eq:diffu1} and \eqref{eq:diffu2}, the result is then followed by the Strang\rq{}s lemma.
\end{proof}

\subsection{Numerical experiments}
\begin{example}
In order to test our collocation matrix $D$, we first consider the simplest substantial FDE \eqref{eqPG1}. Collocating the equation on the Laguerre-Gauss points associated with the weight $x^{\lambda+\nu-1}e^{-\sigma x}$, we obtain the collocation matrix based upon \eqref{advmatrix}.
In this example, we choose $\sigma=2$ and $f(x)=B(7.3, \nu)/\Gamma(\nu)(\nu+6.3)x^{\nu+5.3}e^{-\sigma x}$. Numerical behaviors for different number of collocation points are presented in Figure 3, where we observe that the condition number of the collocation matrix grows mildly with respect to the differential index $\nu$.
\end{example}
\afterpage{
\begin{figure}[hp]
\thispagestyle{empty}
\centering
\resizebox{110mm}{65mm}{\includegraphics{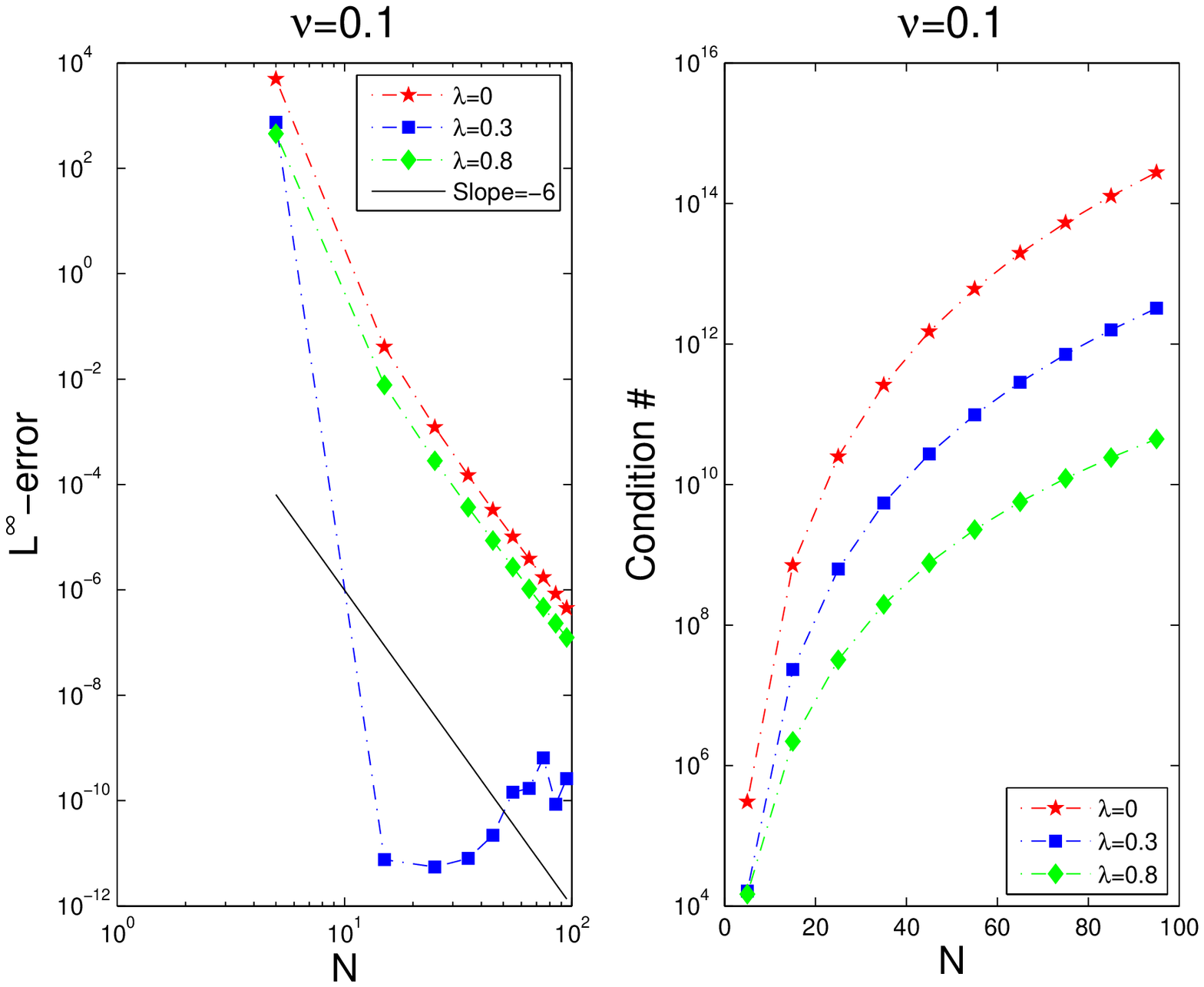}}
\resizebox{110mm}{65mm}{\includegraphics{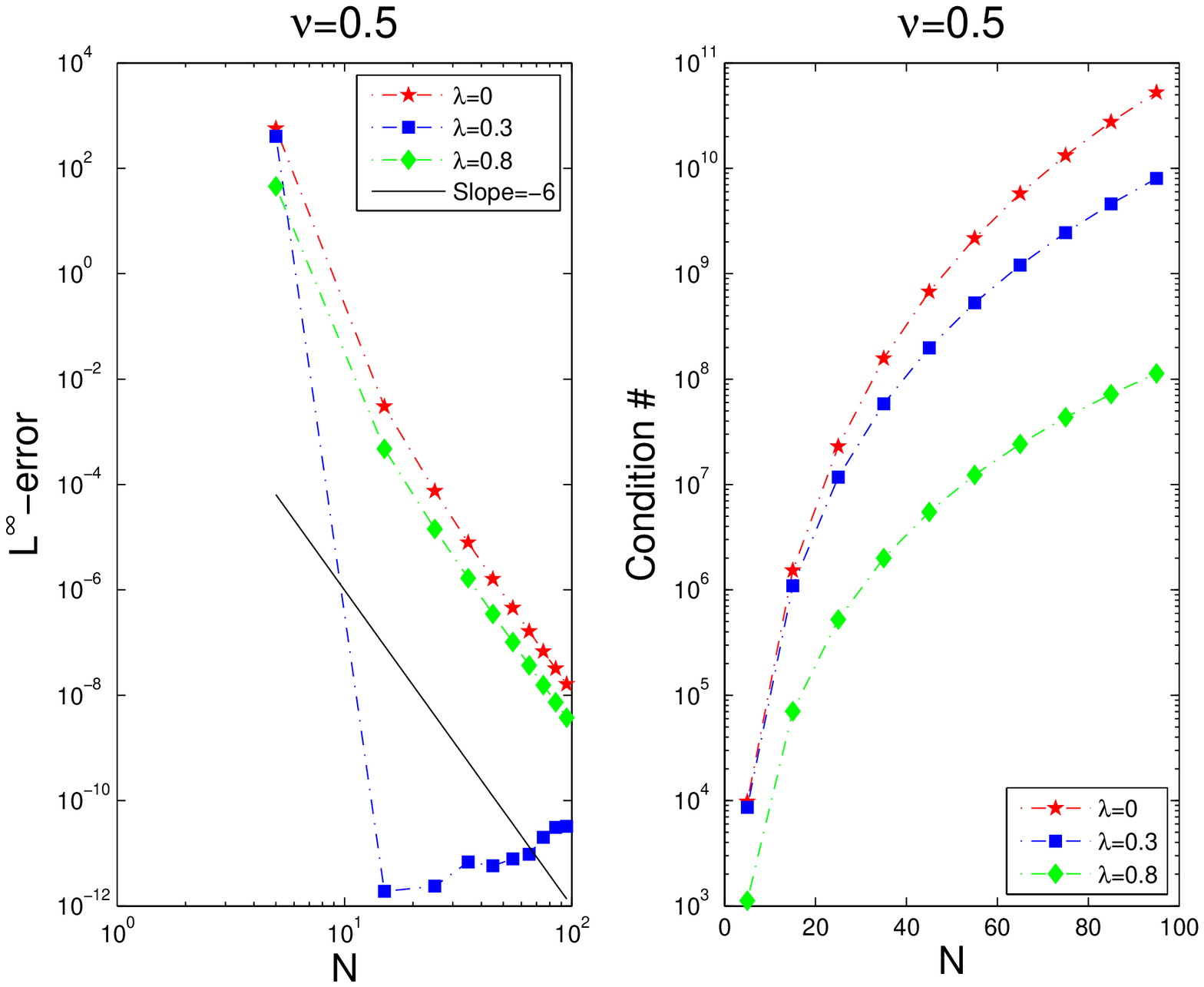}}
\resizebox{110mm}{65mm}{\includegraphics{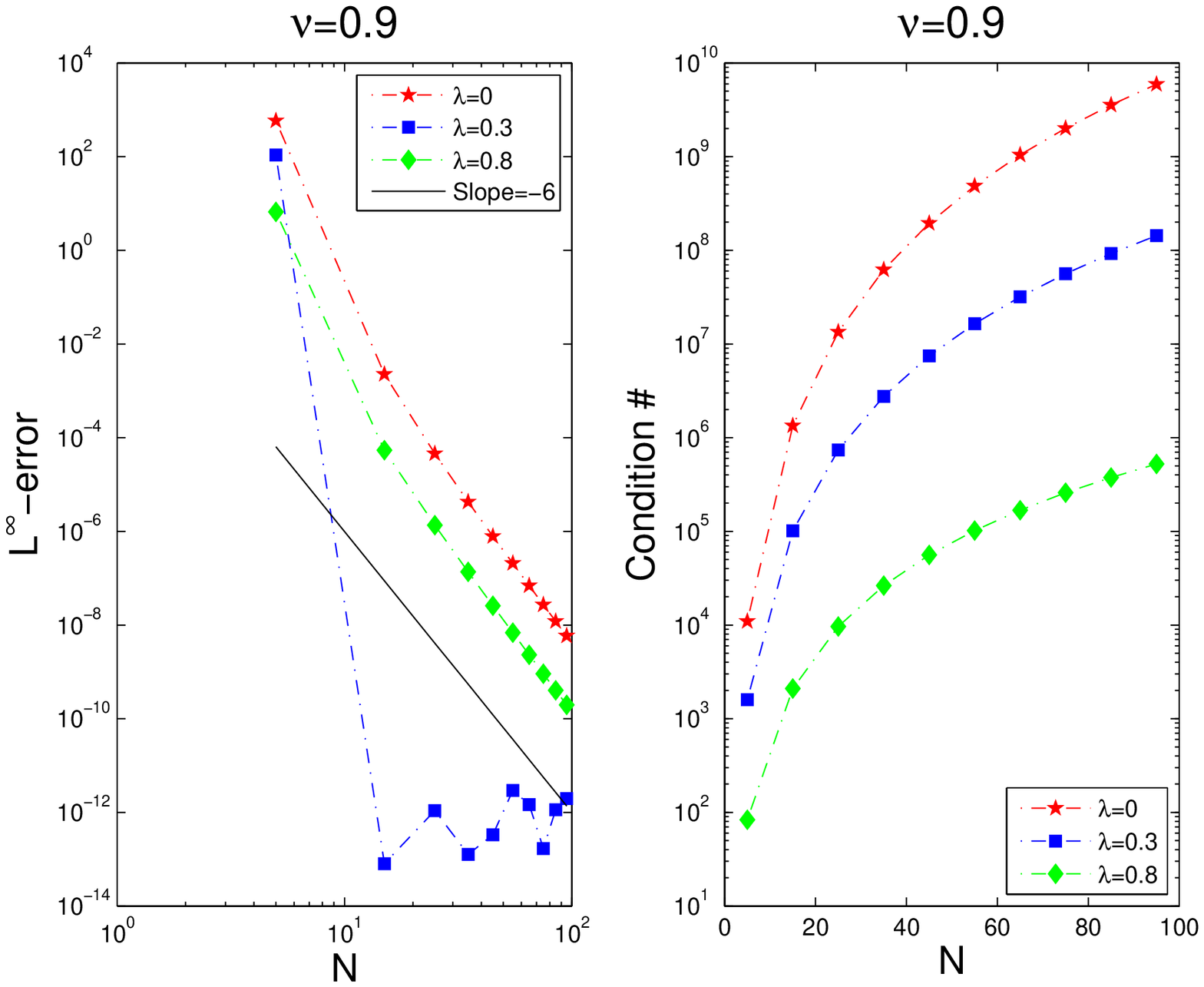}}
\caption{(Example 4): $L^\infty$ norm of numerical errors to $D_s^{2-\nu} u(x)=f(x),\ x\in [0,\infty), u(0)=0, \lim\limits_{x\to \infty}u(x)=0$ for Laguerre-Gauss-Radau points. The right column is plots of condition numbers of direct collocation differential matrix.}
\end{figure}
\clearpage}

\begin{example}
For \eqref{eqPG2}, we choose $f(x)=B(7.3,\nu)/\Gamma(\nu)(\nu+6.3)(\nu+5.3)x^{4.3+\nu}e^{-\sigma x}$ and collocate the equation on Laguerre-Gauss-Radau points with respect to the weight $x^\lambda e^{-2\sigma x}$ with $\sigma=2$.  As predicted by Theorem 4, we only observe round-off errors for $\lambda=0.3$ and  algebraic convergence rate for other $\lambda$\rq{}s, see Figure 4. However, we also observe that the condition number of resulting system grows dramatically as $N$ grows.

\begin{figure}[hp]
\thispagestyle{empty}
\centering
\resizebox{110mm}{65mm}{\includegraphics{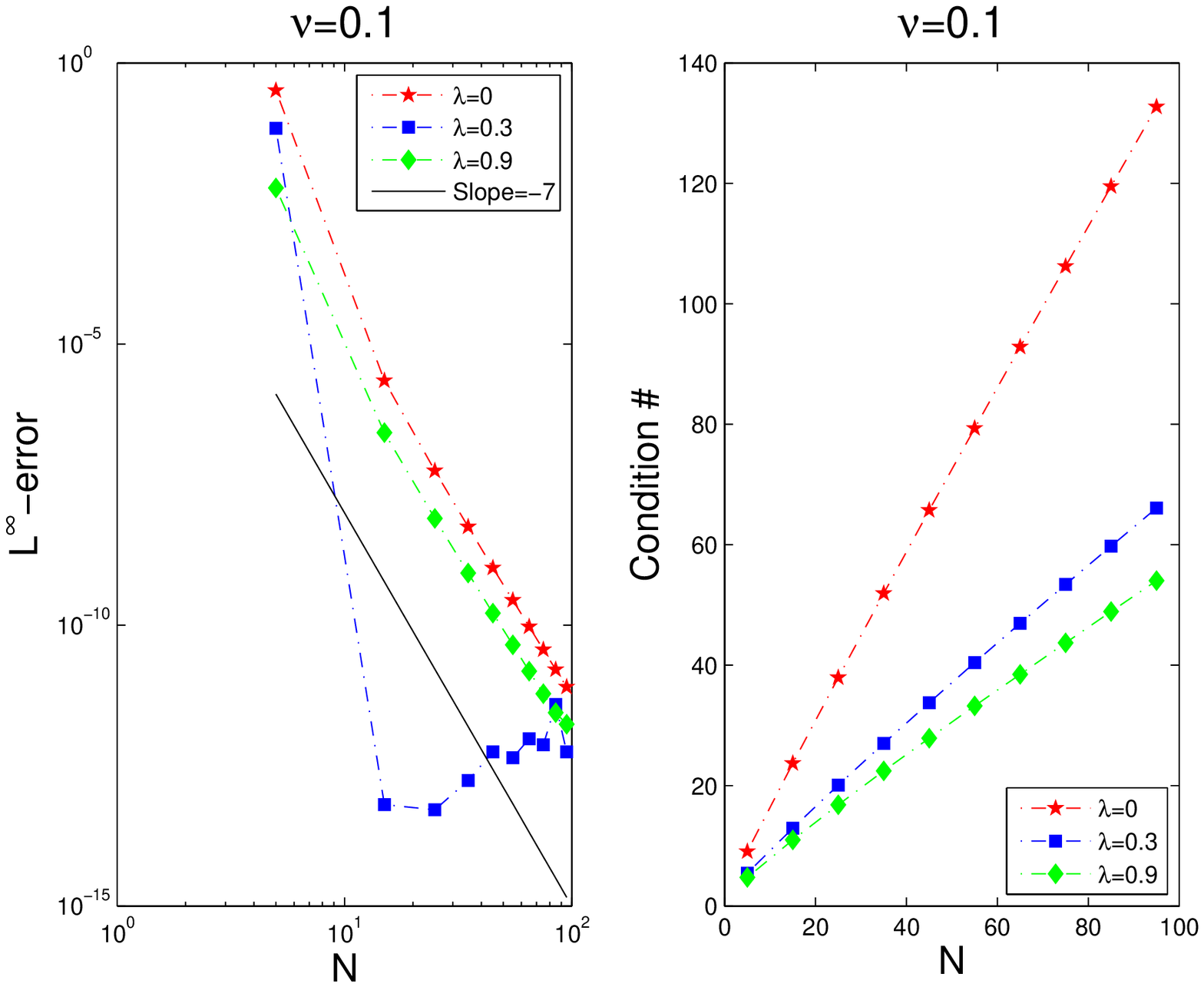}}
\resizebox{110mm}{65mm}{\includegraphics{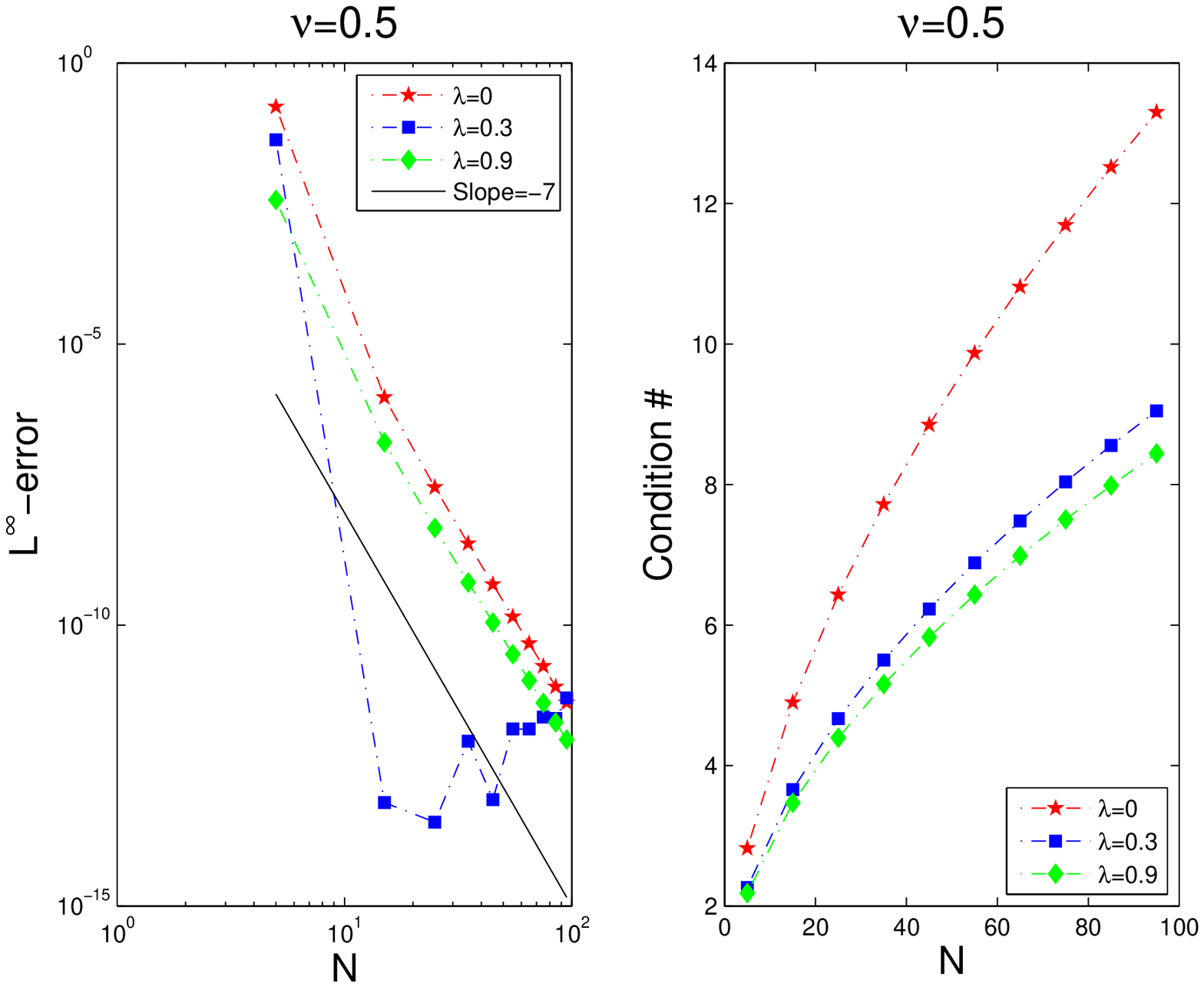}}
\resizebox{110mm}{65mm}{\includegraphics{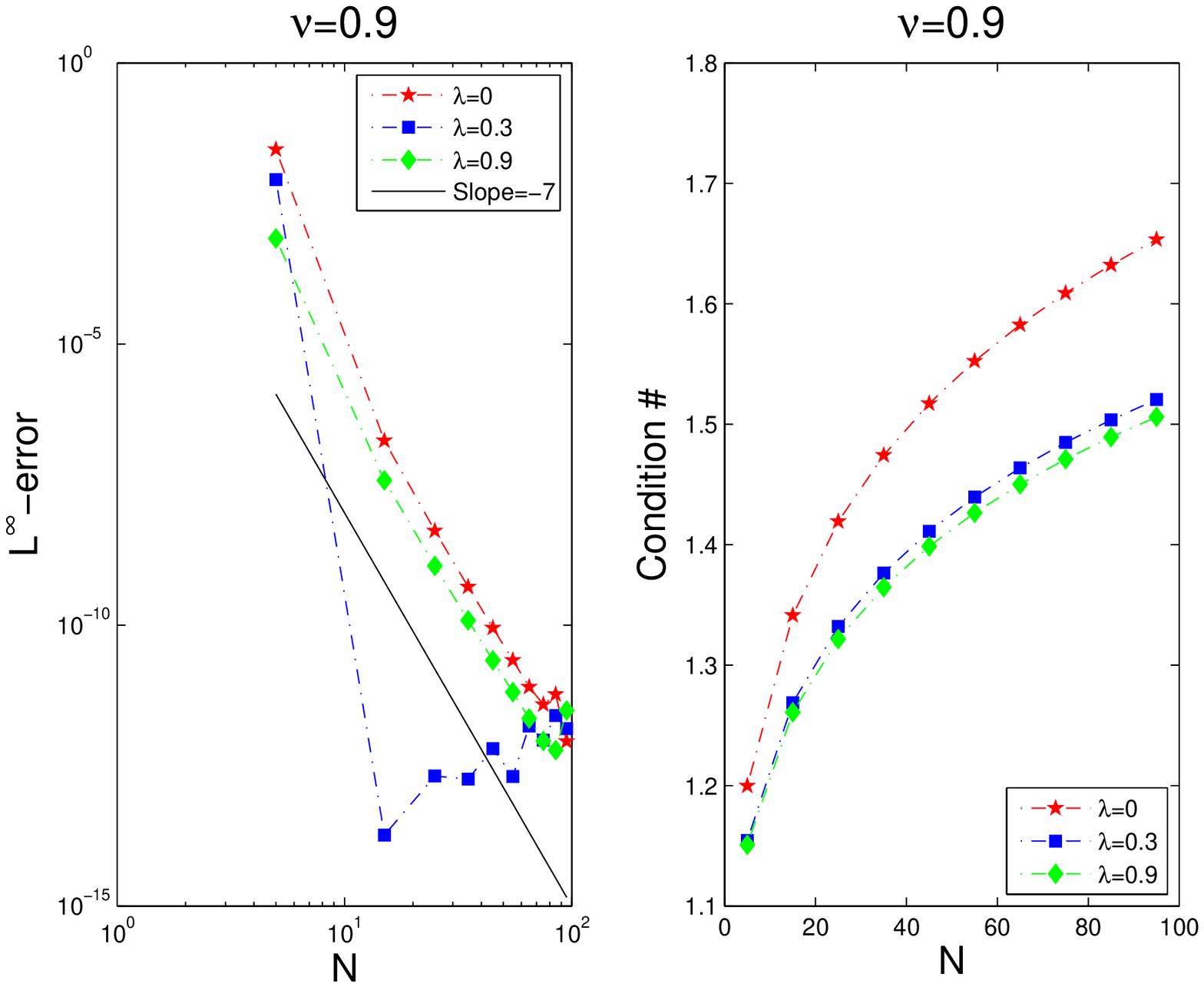}}
\caption{(Example 3): $L^\infty$ norm of numerical errors to $D_s^{1-\nu} u(x)=f(x),\ x\in [0,\infty), \lim\limits_{x\to\infty}u(x)=0$ for Laguerre-Gauss points. The right column is plots of condition numbers of direct collocation differential matrix.}
\end{figure}
\end{example}


\section{Conclusion}
We have considered both Petrov-Galerkin method and spectral collocation method for two types of substantial fractional differential equations. Four different algorithms for the model problems have been proposed, analyzed and tested. Our main  contributions are rooted in:

 $\bullet$ Extension of classical generalized Laguerre polynomials, which serves as a natural basis for our second order problem. Unlike the extension in \cite{G13}, our method inherits some basic properties of standard Laguerre polynomials, such as three-term recursion and orthogonality.

 $\bullet$ Construction of well-conditioned Petrov-Galerkin methods for fractional advection/diffusion equations, which yields diagonal linear systems. Error estimates are derived with convergence rate only depending on the smoothness of the given data.

$\bullet$ Construction of spectral collocation method for these two types of equations. Explicit collocation matrices are developed and associated error estimates are carried out.

The error estimates indicate that we are able to adjust the parameter $\lambda$ in the trial basis for a given data in our algorithms such that  an optimal convergence rate is obtained.
\appendix

\makeatother
\setcounter{equation}{0}
\renewcommand{\theequation}{A. \arabic{equation}}
\section{Properties of extended Laguerre polynomial}
We verify some important properties of our extension of Laguerre polynomials with $-2<\alpha\leq-1$ in this appendix.

\begin{equation}
\bullet\  \text{Recursive\ Relation}:\ (n+1)L_{n+1}^\alpha(x)=(2n+1+\alpha-x)L_n^\alpha(x)-(n+\alpha)L_{n-1}^\alpha(x). \quad\quad\quad\quad\quad\quad\quad
\end{equation}
\begin{proof}
For the case, $L_n^{\alpha+1}(x)$ is a standard polynomial and satisfies the standard three-term relation. Therefore,
\begin{eqnarray}
&&(2n+1+\alpha-x)L_n^\alpha(x)-(n+\alpha)L_{n-1}^\alpha(x)\\
&=&(2n+1+\alpha-x)(L_n^{\alpha+1}(x)-L_{n-1}^{\alpha+1}(x))-(n+\alpha)(L_{n-1}^{\alpha+1}(x)-L_{n-2}^{\alpha+1}(x))\nonumber\\
&=&\bigg[(2n+2+\alpha-x)L_n^{\alpha+1}(x)-(n+\alpha+1)L_{n-1}^{\alpha+1}(x)\bigg]-\bigg[(2n+\alpha-x)L_{n-1}^{\alpha+1}(x)-(n+\alpha)L_{n-2}^{\alpha+1}(x)\bigg]\nonumber\\
&&-L_n^{\alpha+1}(x)\nonumber\\
&=& (n+1)L_{n+1}^{\alpha+1}(x)-(n+1)L_n^{\alpha+1}(x)\nonumber\\
&=&(n+1)L_{n+1}^\alpha(x).\nonumber
\end{eqnarray}
\end{proof}
\begin{equation}
\bullet\  \text{The\  Sturn-Liouville\  equation}:\ -\partial_x(x^{\alpha+1}e^{-x}\partial_xL_n^\alpha(x))=nx^\alpha e^{-x}L_n^\alpha(x). \quad\quad\quad\quad\quad\quad
\end{equation}
\begin{proof}
We first check a useful identity by induction.
\begin{equation}\label{app:eq}
xL_{n-1}^{\alpha+1}(x)=(n+\alpha)L_{n-1}^\alpha(x)-nL_n^\alpha(x).
\end{equation}
By definition, the statement is equivalent to
\begin{equation}
(2n+\alpha)L_{n-1}^{\alpha+1}(x)-nL_n^{\alpha+1}(x)-(n+\alpha)L_{n-2}^{\alpha+1}(x)=xL_{n-1}^{\alpha+1}(x),
\end{equation}
which is obvious by an induction on $n$.  Hence, \eqref{app:eq} is valid for all $\alpha>-2$. Now, let us consider the Sturn-Liouville equation.
\begin{eqnarray}
 &&-\partial_x(x^{\alpha+1}e^{-x}\partial_xL_n^\alpha(x))\nonumber\\
 &=&\partial_x(x^{\alpha+1}e^{-x}(L_{n-1}^{\alpha+2}(x)-L_{n-2}^{\alpha+2}(x))\nonumber\\
 &=&\partial_x(x^{\alpha+1}e^{-x}L_{n-1}^{\alpha+1}(x))\nonumber\\
 &=&x^{\alpha}e^{-x}[(\alpha+1)L_{n-1}^{\alpha+1}(x)-xL_{n-1}^{\alpha+1}(x)-xL_{n-2}^{\alpha+2}(x)]\nonumber\\
 &=&x^{\alpha}e^{-x}[(\alpha+1)L_{n-1}^{\alpha+1}(x)-xL_{n-1}^{\alpha+1}(x)-(n+\alpha)L_{n-2}^{\alpha+1}(x)+(n-1)L_{n-1}^{\alpha+1}(x)]\nonumber\\
 &=&x^{\alpha}e^{-x}[(n+\alpha)(L_{n-1}^{\alpha+1}(x)-L_{n-2}^{\alpha+1}(x))-xL_{n-1}^{\alpha+1}(x)]\nonumber\\
 &=&x^{\alpha}e^{-x}[(n+\alpha)L_{n-1}^\alpha(x)-xL_{n-1}^{\alpha+1}(x)]\nonumber\\
 &=&x^{\alpha}e^{-x}[(n+\alpha)L_{n-1}^\alpha(x)-(n+\alpha)L_{n-1}^\alpha(x)+nL_n^\alpha(x)]\nonumber\\
 &=&n x^\alpha e^{-x}L_n^\alpha(x).
  \end{eqnarray}
 \end{proof}
\begin{equation}
\bullet\ \text{Orthogonality} \int_0^\infty \partial_x^m L_n^\alpha(x)\partial_x^m L_k^\alpha(x)x^{\alpha+m}e^{-x}dx=\frac{\Gamma(n+\alpha+1)}{\Gamma(n-m+1)}\delta_{n,k}, m\geq 0. \quad\quad\quad\quad
\end{equation}
\begin{proof}
We only need to verify the case for $m=0$ since others are followed by property of  standard Laguerre polynomials.
It is clear that $L_n^\alpha(x)$ satisfy the Strun-Liouville equation. Hence, by the Sturn-Liouville theory $L_n^\alpha(x)$ and $L_k^\alpha(x)$ are orthogonal to each other with respect to the weight $x^\alpha e^{-x}$ if $n\neq k$. For $n=k$,
we need to prove
\begin{equation}\label{app:eq0}
\int_0^\infty [L_n^\alpha(x)]^2x^{\alpha}e^{-x}dx=\frac{\Gamma(n+\alpha+1)}{\Gamma(n+1)}.  \quad\quad\quad\quad
\end{equation}
We first derive three useful identities.
\begin{eqnarray}
\int_0^\infty x^2[L_k^\alpha(x)]^2 x^\alpha e^{-x}dx&=&\int_0^\infty [L_k^{\alpha+1}(x)-L_{k-1}^{\alpha+1}(x)]^2 x^{\alpha+2}e^{-x}dx\nonumber\\
&=&\int_0^\infty [L_k^{\alpha+2}-2L_{k-1}^{\alpha+2}(x)+L_{k-2}^{\alpha+2}(x)]^2x^{\alpha+2}e^{-x}dx\nonumber\\
&=&\frac{\Gamma(k+\alpha+3)}{\Gamma(k+1)}+4\frac{\Gamma(k+\alpha+2)}{\Gamma(k)}+\frac{\Gamma(k+\alpha+1)}{\Gamma(k-1)}. \label{app:eq1}\\
\int_0^\infty [L_k^\alpha(x)]^2 x^{\alpha+1}e^{-x}dx&=&\int_0^\infty [L_k^{\alpha+1}(x)-L_{k-1}^{\alpha+1}(x)]^2 x^{\alpha+1}e^{-x}dx\nonumber\\
&=&\frac{\Gamma(k+\alpha+2)}{\Gamma(k+1)}+\frac{\Gamma(k+\alpha+1)}{\Gamma(k)}. \label{app:eq2}\\
\int_0^\infty L_k^\alpha(x)L_{k-1}^\alpha(x) x^{\alpha+1}e^{-x}dx&=&\int_0^\infty [L_k^{\alpha+1}(x)-L_{k-1}^{\alpha+1}(x)]
[L_{k-1}^{\alpha+1}(x)-L_{k-2}^{\alpha+1}(x)]\nonumber\\
&=&-\frac{\Gamma(k+\alpha+1)}{\Gamma(k)}.\label{app:eq3}
\end{eqnarray}
Again, we prove \eqref{app:eq0} by induction. The case $n=0$ follows  the definition the gamma function. Suppose when $n<k$, the identity holds.
Now, let us verify $n=k+1$,
\begin{eqnarray}
&&\int_0^\infty [L_{k+1}^\alpha(x)]^2x^\alpha e^{-x}dx\\
&=&\int_0^\infty\bigg[\frac{2k+\alpha+1}{k+1}L_k^\alpha(x)-\frac{x}{k+1}L_k^\alpha(x)-\frac{k+\alpha}{k+1}L_{k-1}^\alpha(x)\bigg]^2 x^\alpha e^{-x}dx\ \ \ \  \text{(Recursive \ Relation)}\nonumber\\
&=&\bigg[\frac{(2k+\alpha+1)}{(k+1)}\bigg]^2 \frac{\Gamma(k+\alpha+1)}{\Gamma(k+1)}
+\frac{1}{(k+1)^2}\bigg[\frac{\Gamma(k+\alpha+3)}{\Gamma(k+1)}+4\frac{\Gamma(k+\alpha+2)}{\Gamma(k)}+\frac{\Gamma(k+\alpha+1)}{\Gamma(k-1)}\bigg] \nonumber\\
&&+\bigg[\frac{(k+\alpha)}{(k+1)}\bigg]^2\frac{\Gamma(k+\alpha)}{\Gamma(k)}
-\frac{2(2k+\alpha+1)}{(k+1)^2}\bigg[\frac{\Gamma(k+\alpha+2)}{\Gamma(k+1)}+\frac{\Gamma(k+\alpha+1)}{\Gamma(k)}\bigg]
-\frac{2(k+\alpha)}{(k+1)^2}\frac{\Gamma(k+\alpha+1)}{\Gamma(k)}\nonumber\\
&=&\frac{1}{(k+1)^2}\bigg[(2k+\alpha+1)^2\frac{\Gamma(k+\alpha+1)}{\Gamma(k+1)}+\frac{\Gamma(k+\alpha+3)}{\Gamma(k+1)}+\frac{4\Gamma(k+\alpha+2)}{\Gamma(k)}-(4k+2\alpha+2)\frac{\Gamma(k+\alpha+2)}{\Gamma(k+1)}\nonumber\\
&&+\frac{\Gamma(k+\alpha+1)}{\Gamma(k)}(-3-4k-3\alpha)\bigg]\nonumber\\
&=&\frac{1}{(k+1)^2}\bigg[(2k+\alpha+1)(-1-\alpha)\frac{\Gamma(k+\alpha+1)}{\Gamma(k+1)}+\frac{\Gamma(k+\alpha+3)}{\Gamma(k+1)}+(1+\alpha)\frac{\Gamma(k+\alpha+1)}{\Gamma(k)}\bigg]\nonumber\\
&=&\frac{1}{(k+1)^2} \bigg[\frac{\Gamma(k+\alpha+3)}{\Gamma(k+1)}-(\alpha+1)\frac{\Gamma(k+\alpha+2)}{\Gamma(k+1)}\bigg]\nonumber\\
&=&\frac{\Gamma(k+\alpha+2)}{\Gamma(k+2)}.\nonumber
 \end{eqnarray}
Therefore, \eqref{app:eq0} holds for all $n$.
\end{proof}

\makeatother
\setcounter{equation}{0}
\renewcommand{\theequation}{B. \arabic{equation}}
\section{Verification of some properties of the pair $\{X_n, Y_n\}$}
Recall that
\begin{eqnarray}
&&X_n=\text{span}\bigg\{\frac{\Gamma(n+\lambda+1)}{\Gamma(n-1+\lambda+\nu)}\hat{L}_n^{\lambda+\nu-2}(x)\bigg\}\nonumber\\
 &&Y_n=\text{span}\{\hat{L}_n^\lambda(x)\}
\end{eqnarray}
We define a simple projection
\begin{equation}
\Pi_nx_n=y_n.
\end{equation}
The fact that $Y_n\cap X_n^\bot=\{0\}$ is obvious. We only need to verify that $\{X_n, Y_n\}$ is a regular pair
\begin{eqnarray}
\|x_n\|^2_{\hat{w}^{2-\lambda-\nu}}&=&\bigg(\frac{\Gamma(n+\lambda+1)}{\Gamma(n-1+\lambda+\nu)}\bigg)^2 \frac{\Gamma(n+\lambda+\nu-1)}{\Gamma(n+1)}=\mathcal{O}(n^{2+\lambda-\nu})\nonumber\\
(x_n, y_n)_{\hat{w}^{2-\lambda-\nu}}&=&\frac{\Gamma(n+\lambda+1)}{\Gamma(n-1+\lambda+\nu)}\int_0^\infty x^\lambda e^{-2\sigma x}L_n^{\lambda+\nu-2}(2\sigma x) L_n^\lambda(2\sigma x)dx\nonumber\\
&=&\frac{\Gamma(n+\lambda+1)}{\Gamma(n-1+\lambda+\nu)}\int_0^\infty x^\lambda e^{-2\sigma x}L_n^{\lambda}(2\sigma x) L_n^\lambda(2\sigma x)dx\nonumber\\
&=&\bigg(\frac{1}{2\sigma}\bigg)^{1+\lambda}\frac{\Gamma(n+\lambda+1)^2}{\Gamma(n-1+\lambda+\nu)\Gamma(n+1)}\nonumber\\
&=&\mathcal{O}(n^{2+\lambda-\nu})
 \end{eqnarray}

\begin{eqnarray}\label{appendB:eq:y}
\|y_n\|^2_{\hat{w}^{2-\lambda-\nu}}&=&\int_0^\infty x^{2+\lambda-\nu}e^{-2\sigma x}[L_n^\lambda(2\sigma x)]^2 dx\nonumber\\
&=& \int_0^\infty x^{2+\lambda-\nu}e^{-2\sigma x}\bigg[\sum\limits_{k=0}^n\binom{n-k+\nu-3}{n-k}L_k^{2+\lambda-\nu}(2\sigma x)\bigg]^2 dx\nonumber\\
&=&\bigg(\frac{1}{2\sigma}\bigg)^{1+\lambda} \sum\limits_{k=0}^n\binom{n-k+\nu-3}{n-k}^2\frac{\Gamma(k+3+\lambda-\nu)}{\Gamma(k+1)}\nonumber\\
&=&\bigg(\frac{1}{2\sigma}\bigg)^{1+\lambda} \sum\limits_{k=0}^n \bigg(\frac{(\nu-2)_{n-k}}{(n-k)!}\bigg)^2\frac{\Gamma(k+3+\lambda-\nu)}{\Gamma(k+1)}\nonumber\\
&=&\bigg(\frac{1}{2\sigma}\bigg)^{1+\lambda} \sum\limits_{k=0}^n \bigg(\frac{\Gamma(n-k+\nu-2)}{\Gamma(n-k+1)\Gamma(\nu-2)}\bigg)^2\frac{\Gamma(k+3+\lambda-\nu)}{\Gamma(k+1)}
\end{eqnarray}
It is trivial that
\begin{equation}
\|y_n\|^2_{\hat{w}^{2-\lambda-\nu}}>\bigg(\frac{1}{2\sigma}\bigg)^{1+\lambda} \frac{\Gamma(n+3+\lambda-\nu)}{\Gamma(n+1)}. \end{equation}
From \eqref{appendB:eq:y}, we also have
\begin{eqnarray}
\|y_n\|^2_{\hat{w}^{2-\lambda-\nu}}&<& \frac{\Gamma(n+3+\lambda-\nu)}{\Gamma(n+1)}\bigg(\frac{1}{2\sigma}\bigg)^{1+\lambda} \sum\limits_{k=0}^n \bigg(\frac{\Gamma(n-k+\nu-2)}{\Gamma(n-k+1)\Gamma(\nu-2)}\bigg)^2\nonumber\\
&=&\frac{\Gamma(n+3+\lambda-\nu)}{\Gamma(n+1)}\bigg(\frac{1}{2\sigma}\bigg)^{1+\lambda} \sum\limits_{k=0}^n \bigg(\frac{\Gamma(k+\nu-2)}{\Gamma(k+1)\Gamma(\nu-2)}\bigg)^2\nonumber\\
&=&\frac{\Gamma(n+3+\lambda-\nu)}{\Gamma(n+1)}\bigg(C+\sum\limits_{k=3}^n \bigg[\frac{1}{k(k-1)}\bigg]^2\bigg[\frac{\Gamma(k+\nu-2)}{\Gamma(k-1)}\bigg]^2\bigg)
\end{eqnarray}
Since $\Gamma(x)$ is an increasing function for $x\geq 2$\cite[Page 255]{AS} and $k+\nu-2<k-1$, we clearly have
$\Gamma(k+\nu-2)/\Gamma(k-1)<1$. Hence,
\begin{equation}
\|y_n\|^2_{\hat{w}^{2-\lambda-\nu}}\leq C\frac{\Gamma(n+3+\lambda-\nu)}{\Gamma(n+1)}.
\end{equation}
Therefore, $\|y_n\|^2_{\hat{w}^{2-\lambda-\nu}}=\mathcal{O}(n^{2+\lambda-\nu})$ and $\{X_n, Y_n\}$ is a regular pair.


\begin{thebibliography}{99}
\bibitem{AS}
M. Abramowitz and I. Stegun, \emph{Handbook of mathematical functions with formulas, graphs and mathematical tables},
National Bureau of Standards, Applied Mathematics Series, 55, 1964.

\bibitem{BM10}
B. Baeumera and M.M. Meerschaertb, \emph{Tempered stable Lévy motion and transient super-diffusion},
J. Comp. Appl. Math., 233, 2438-2448, 2010.

\bibitem{Bru04}
H. Brunner, \emph{Collocation Methods for Volterra Integral and Related Functional Differential
Equations}, Cambridge University Press, 2004.

\bibitem{CHQZ06}
 C. Canuto, M.Y. Hussaini, A. Quarteroni, and T. A. Zang, \emph{Spectral methods: Fundamentals in single domains},
 Springer-Verlag, Berlin-Heidelberg, 2006.

\bibitem{CB11}
S. Carmi and E. Barkai,  \emph{Fractional Feynman-Kac equation for weak ergodicity breaking}, Phys.
Rev. E., 84, 061104, 2011.

\bibitem{CD}
M. Chen and W. Deng, \emph{Discretized fractional substantial calculus}, arXiv:1310.3086v1.

\bibitem{XuYS}
Z. Chen and Y. Xu, \emph{The Petrov-Galerkin and iterated Petrov-Galerkin methods for second-kind integral equations},
SIAM J. Numer. Anal., 35(1), 406-434, 1998.

\bibitem{CTB10}
S. Carmi, L. Turgeman and E. Barkai,  \emph{On distributions of functionals of anomalous diffusion paths}, J. Stat. Phys.
, 141, 1071-1092, 2010.

\bibitem{CSW14}
S. Chen, J. Shen and L.L. Wang, \emph{Generalized Jacobi functions and their applications to fractional differential equations}, arxiv 1407.8303.

\bibitem{Diet1}
\newblock K. Diethelm, N. Ford, A. Freed,
\newblock \emph{Detailed error analysis for a fractional Adams method},
\newblock Numer. Algorithms, \textbf{36}(1) (2004), 31-52.

\bibitem{FJBE06}
 R. Friedrich, F. Jenko, A. Baule and S. Eule , \emph{Anomalous diffusion of inertial, weakly
damped particles}, Phys. Rev. Lett., 96, 230601, 2006.

\bibitem{G13}
B. Guo, \emph{Some progress in spectral methods},
Sci China Math., 56, 2411-2438, 2013.

\bibitem{GSW09}
B. Guo, J. Shen and L. Wang, \emph{Generalized Jacobi polynomials/functions and their applications},
Appl. Numer. Math., 59, 1011-1028, 2009.

\bibitem{HH98}
Y. Hatano and N. Hatano, \emph{Dispersive transport of ions in column experiments: an explanation of long-tailed profiles.}
Water Res. Research, 34(5), 1027-1033, 1998.

\bibitem{HW14}
C. Huang and L.L. Wang, \emph{A well-conditioned collocation method for fractional differential equation}, preprint.

\bibitem{KSZ96}
J. Klafter, M.F. Shlesinger, and G. Zumofen, \emph{Beyond brownian motion},
Phys. Today, 49(2), 33-39, 1996.

\bibitem{JLP}
B. Jin,  R. Lazarov and J. Pasciak, \emph{Variational formulation of problems involving fractional order differential operators},
arXiv1307.4975.

\bibitem{LD}
C. Li and W. Deng, \emph{High order schemes for the tempered fractional
diffusion equations}, arXiv:1402.0064v1.

\bibitem{MK00}
R. Metzler and J. Klafter, \emph{The random walk's guide to anamalous diffusion: A fractional dynamics
approach}, Phys. Rep., 339, 1-77, 2000.

\bibitem{MM09}
O. Marom, E. Momoniat, \emph{A comparison of numerical solutions of fractional diffusion models in finance},
Nonl. Anal.: RWA, 10, 3435-3442, 2009.

\bibitem{PBM86} A.P. Prudnikov, Yu.A. Brychkov and O.I Marichev, \emph{
Integrals and series, Translated from the Russian by N.M. Queen,
Volume 2}, Gordon and Breach Science, New York, 1986.

\bibitem{Pod}
\newblock I. Podlubny,
\newblock \emph{Fractional differential equations},
\newblock New York: Academic Press, 1999.

\bibitem{Shle86}
M.F. Shlesinger, \emph{Levy walks with application to turbulence and chaos},
Physica A, 140(1-2), 212-218, 1986.

\bibitem{STW11}
J. Shen, T. Tang and L.L. Wang, \emph{Spectral methods: algorithms, analysis, and applications}, Springer series in computational mathematics, 41, Springer, 2011.

\bibitem{Szego}
G. Szego, \emph{Orthogonal polynomials},
fourth ed., Amer. Math. Soc. Publ., 23, Providence, RI, 1975.

\bibitem{Tadj}
\newblock C. Tajeran and M.M. Meerschaert,
\newblock \emph{A second-order accurate numerical method for the two-dimensional fractional diffusion equation},
\newblock J. Comp. Phys.,\textbf{220} (2007), 813-823.

\bibitem{Tre00}
L.N. Trefethen, \emph{Spectral Methods in Matlab},
 SIAM, Philadelphia, 2000.

\bibitem{Xu1}
\newblock X.J. Li and C. Xu,
\newblock \emph{A space-time spectral method for the time-fractional diffusion equation},
\newblock SIAM J. Numer. Anal. \textbf{47}(3) (2009), 2108-2131.

\bibitem{Xu2}
\newblock Y.M. Lin and C. Xu,
\newblock \emph{Finite difference/spectral approximations for the time-fractional diffusion equation},
\newblock J. Comp. Phys., \textbf{225}(2) (2007), 1533-1552.

\bibitem{ZK14}
M. Zayernouri and G.E. Karniadakis, \emph{Fractional spectral collocation method},
SIAM J. Sci. Computing, 38, A40-A62, 2014.

\bibitem{ZK14JCP}
M. Zayernouri and G.E. Karniadakis, \emph{Exponentially accurate spectral and spectral
element methods for fractional ODEs},
 J. Comp. Phys., 257, 460-480, 2014.

\end{thebibliography}
\end{document}